\documentclass[preprint,3p]{elsarticle}
\usepackage{amssymb}
\usepackage{amsmath,amsthm,amscd,amssymb,enumitem,latexsym,upref,xfrac}
\usepackage{graphicx}
\usepackage{color}
\usepackage{epstopdf}
\usepackage{pdfsync}
\usepackage{amssymb,amsfonts,amsthm,amssymb}
\usepackage{amsmath}
\usepackage{mathrsfs}
\usepackage{mathtools}
\usepackage{calc}
\usepackage{verbatim}
\usepackage{stmaryrd}
\usepackage{xcolor}
\usepackage{transparent} 
\newcommand*{\mailto}[1]{\href{mailto:#1}{\nolinkurl{#1}}}

\newcommand{\R}{{\mathbb{R}}}
\newcommand{\one}{\chi}

\newtheorem{theorem}{Theorem}
\newtheorem{lemma}{Lemma}
\newtheorem{proposition}{Proposition}
\newtheorem{corollary}{Corollary}
\newtheorem{remark}{Remark}

\def\be#1{\begin{equation}\label{#1}}
\def\ee{\end{equation}}

\def\bea{\begin{eqnarray}}
\def\eea{\end{eqnarray}}
\def\bth#1{\begin{theorem}\label{#1}}
\def\eth{\end{theorem}}
\def\brem#1{\begin{remark}\label{#1}}
\def\erem{\end{remark}}
\def\blem#1{\begin{lemma}\label{#1}}
\def\elem{\end{lemma}}
\def\beas{\begin{eqnarray*}}
\def\eeas{\end{eqnarray*}}
\def\barr{\begin{array}}
\def\earr{\end{array}}
\def\bdm{\begin{displaymath}}
\def\edm{\end{displaymath}}
\def\bcor#1{\begin{corollary}\label{#1}}
\def\ecor{\end{corollary}}
\def\cW{\mathcal{W}}

\def\div{\mathrm{div}}

\setcounter{tocdepth}{3}

\numberwithin{equation}{section}
\allowdisplaybreaks

\begin{document}

\begin{frontmatter}

\title{On higher regularity for the Westervelt equation with strong nonlinear damping}

\author{Vanja Nikoli\' c \corref{cor1}}

\address{Insitut f\"ur Mathematik, Alpen-Adria-Universit\"at Klagenfurt\\
Universit\"atsstra{\ss}e 65-57, 9020 Klagenfurt am W\"orthersee,  Austria}
\ead{vanja.nikolic@aau.at}

\author{Barbara Kaltenbacher}
\address{Insitut f\"ur Mathematik, Alpen-Adria-Universit\"at Klagenfurt\\
Universit\"atsstra{\ss}e 65-57, 9020 Klagenfurt am W\"orthersee,  Austria}
\ead{barbara.kaltenbacher@aau.at}
\cortext[cor1]{Corresponding Author: Vanja Nikoli\' c, Universit\"atsstra{\ss}e 65-57, 9020 Klagenfurt am W\"orthersee,  Austria; vanja.nikolic@aau.at; +43 463 2700 993133}
\begin{abstract}
\indent We show higher interior regularity for the Westervelt equation with strong nonlinear damping term of the $q$-Laplace type. Secondly, we investigate an interface coupling problem for these models, which arise, e.g., in the context of medical applications of high intensity focused ultrasound in the treatment of kidney stones. We show that the solution to the coupled problem exhibits piecewise $H^2$ regularity in space, provided that the gradient of the acoustic pressure is essentially bounded in space and time on the whole domain. This result is of importance in numerical approximations of the present problem, as well as in gradient based algorithms for finding the optimal shape of the focusing acoustic lens in lithotripsy.
\end{abstract}

\begin{keyword}
nonlinear acoustics, interface coupling, Westervelt's equation, $q$-Laplace

\MSC[2010] 35L05, 35L20

\end{keyword}

\end{frontmatter}

\section{Introduction}
High intensity focused ultrasound has numerous applications starting from the treatment of kidney and bladder stones, via thermo therapy and ultrasound cleaning to sonochemistry. Due to the nonlinear effects observed in the propagation of ultrasound in these cases, such as the appearance of sawtooth solutions,  models of nonlinear acoustics and their rigourous mathematical treatment have become of great interest in recent years. \\
\indent One of the most popular models for the nonlinear propagation of ultrasound is the Westervelt equation
\begin{align} \label{westervelt}
(1-2ku)\ddot{u} -c^2 \Delta u - b\Delta \dot{u} = 2k(\dot{u})^2,
\end{align}
expressed here in terms of the acoustic pressure $u$, where a dot denotes time differentiation, $b$ the diffusivity and $c$ the speed of sound, $k = \beta_a / \lambda$, $\lambda=\varrho c^2$ is the bulk modulus, $\varrho$ is the mass density, $\beta_a = 1 + B/(2A)$, and $B/A$ represents the parameter of nonlinearity. A detailed derivation of \eqref{westervelt} can be found in \cite{HamiltonBlackstock}, \cite{manfred} and \cite{Westervelt}.\\
\indent  Westervelt's equation is a quasilinear wave equation which can degenerate due to the factor $1-2ku$. This means that any analysis of this equation has to include bounding away from zero this term, i.e. finding an essential bound for $u$.  That has been so far  achieved by means of employing the Sobolev embedding $H^2(\Omega) \hookrightarrow L^{\infty}(\Omega)$ (cf. \cite{KL08}, \cite{KLV10}), which implies that the solution of the Westervelt equation has to exhibit $H^2$ regularity in space. However, achieving $H^2$-regularity is too high of a demand in the case of coupling acoustic regions with different material parameters.\\
\indent To remedy this issue, Westervelt's equation is considered with an added nonlinear damping term
\begin{align} \label{westervelt_damp}
(1-2ku)\ddot{u}-c^2\Delta u -\div(b((1-\delta) +\delta|\nabla \dot{u}|^{q-1})\nabla \dot{u})
=2k(\dot{u})^2,
\end{align} 
cf., \cite{BKR13}, \cite{VN},
where $\delta \in (0,1)$, $q \geq 1$, $q >d-1$, and $d \in \{1,2,3\}$ is the dimension of the spatial domain $\Omega\subseteq\R^d$ on which \eqref{westervelt_damp} is considered. Since degeneracy is avoided with the help of the embedding $W^{1,q+1}(\Omega) \hookrightarrow L^\infty(\Omega)$, using this model allows to show existence of weak solutions with $W^{1,q+1}$ regularity in space, and in turn well-posedness of the acoustic-acoustic coupling problem.\\
\indent Considerations of the acoustic-acoustic coupling are motivated by lithotripsy where a silicone acoustic lens focuses the ultrasound traveling through a nonlinearly acoustic fluid to a kidney stone (see fig. 1). The interface coupling is  modeled by the presence of spatially varying coefficients in the weak form of the equation \eqref{westervelt_damp} (see \cite{BGT97} for the linear and \cite{BKR13} and \cite{VN} for the nonlinear case) as follows:
\begin{equation}\label{ModWest_coupled}
\begin{cases} 
\text{Find} \ u  \ \text{such that} \vspace{1.5mm}\\
\int_0^T \int_{\Omega} \{\frac{1}{\lambda(x)}(1-2k(x)u)\ddot{u} \phi+\frac{1}{\varrho(x)}\nabla u \cdot \nabla \phi + b(x)(1-\delta(x))\nabla \dot{u} \cdot \nabla \phi \vspace{1.5mm}\\
\quad \quad \quad +b (x)\delta(x)|\nabla \dot{u}|^{q-1} \nabla \dot{u} \cdot \nabla \phi-\frac{2k(x)}{\lambda(x)}(\dot{u})^2 \phi\} \, dx \, ds =0 \vspace{1.5mm}\\
\text{holds for all test functions} \ \phi \in \tilde{X}=L^2(0,T;W_0^{1,q+1}(\Omega)),
\end{cases}
\end{equation}
with $(u,\dot{u})\vert_{t=0}=(u_0,u_1)$. In this model $b$ stands for the quotient between the diffusivity and the bulk modulus, while the other coefficients maintain their meaning. The coefficients are allowed to jump only over the interface, i.e. the boundary of the lens. For notational brevity, we emphasized the space dependence of coefficients in \eqref{ModWest_coupled}, while omitting space and time dependence of $u$ in the notation. \vspace{2.5mm} \\
\begin{center}
   \def\svgwidth{207pt}
\begingroup%
  \makeatletter%
  \providecommand\color[2][]{%
    \errmessage{(Inkscape) Color is used for the text in Inkscape, but the package 'color.sty' is not loaded}%
    \renewcommand\color[2][]{}%
  }%
  \providecommand\transparent[1]{%
    \errmessage{(Inkscape) Transparency is used (non-zero) for the text in Inkscape, but the package 'transparent.sty' is not loaded}%
    \renewcommand\transparent[1]{}%
  }%
  \providecommand\rotatebox[2]{#2}%
  \ifx\svgwidth\undefined%
    \setlength{\unitlength}{968.42395242bp}%
    \ifx\svgscale\undefined%
      \relax%
    \else%
      \setlength{\unitlength}{\unitlength * \real{\svgscale}}%
    \fi%
  \else%
    \setlength{\unitlength}{\svgwidth}%
  \fi%
  \global\let\svgwidth\undefined%
  \global\let\svgscale\undefined%
  \makeatother%
  \begin{picture}(1,0.76260738)%
    \put(0,0){\includegraphics[width=\unitlength]{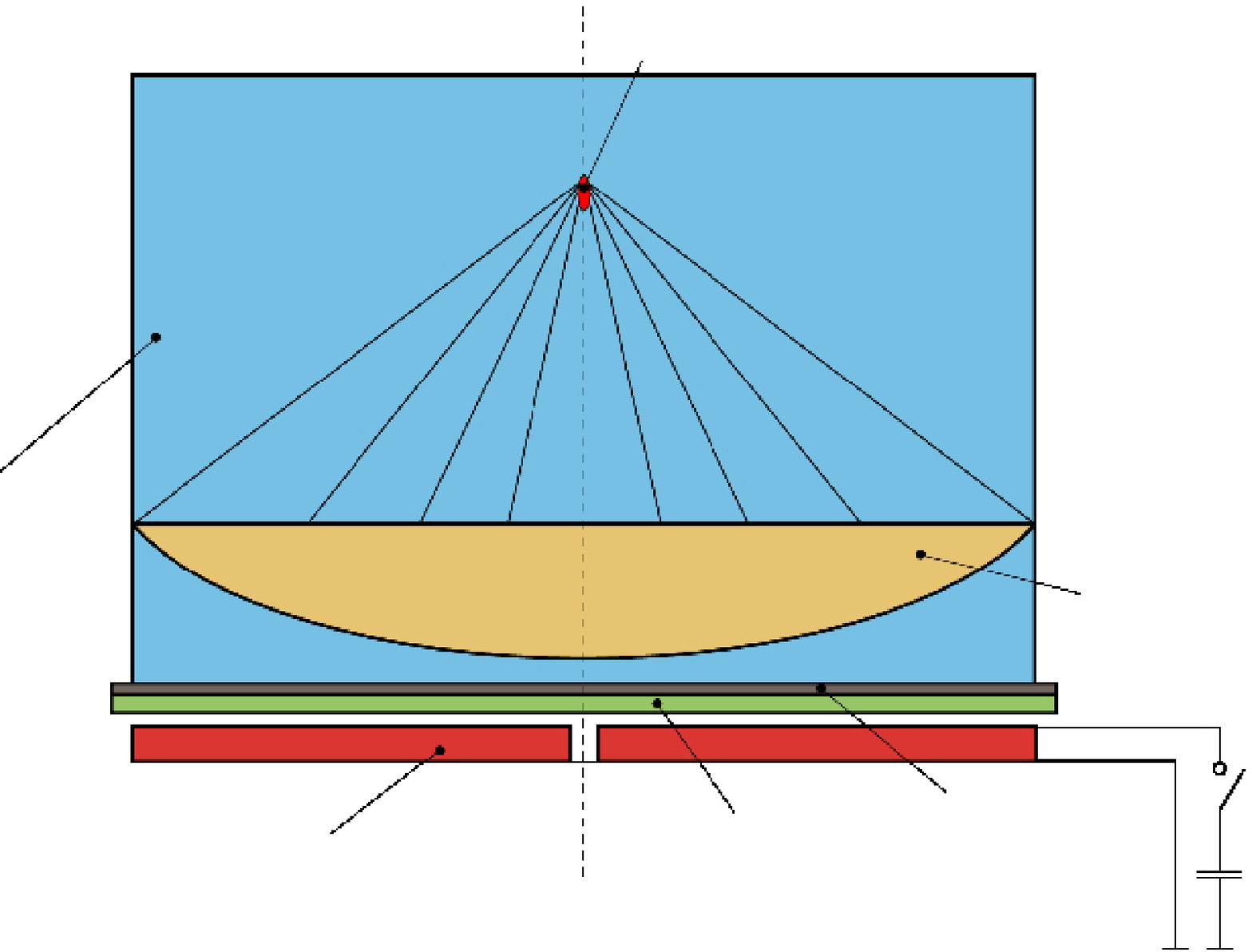}}%
    \put(0.18847806,0.14273979){\color[rgb]{0,0,0}\makebox(0,0)[lt]{\begin{minipage}{0.10739098\unitlength}\raggedright \end{minipage}}}%
    \put(0.2209543,0.07883277){\color[rgb]{0,0,0}\makebox(0,0)[lt]{\begin{minipage}{0.2100615\unitlength}\raggedright \small Coil\end{minipage}}}%
    \put(0.50207329,0.08638679){\color[rgb]{0,0,0}\makebox(0,0)[lt]{\begin{minipage}{0.24074464\unitlength}\raggedright \small Membrane\end{minipage}}}%
    \put(0.72648888,0.1139189){\color[rgb]{0,0,0}\makebox(0,0)[lt]{\begin{minipage}{0.14397472\unitlength}\raggedright \small Rubber\end{minipage}}}%
    \put(0.50080019,0.73600054){\color[rgb]{0,0,0}\makebox(0,0)[lb]{\smash{\small Kidney stone}}}%
    \put(0.28536583,0.79979627){\color[rgb]{0,0,0}\makebox(0,0)[lb]{\smash{\small Axis of Rotation}}}%
    \put(0.87764683,0.26170114){\color[rgb]{0,0,0}\makebox(0,0)[lb]{\smash{\small Lens} $\Omega_+$}}%
    \put(-0.11029899,0.31652083){\color[rgb]{0,0,0}\makebox(0,0)[lb]{\smash{\small Fluid} $\Omega_-$}}%
  \end{picture}%
\endgroup%

 \scriptsize fig. 1: Schematic of a power source in lithotripsy \\ based on the electromagnetic principle
\end{center}
\indent \indent The first goal of the present paper is to show higher interior regularity results for solutions of \eqref{westervelt_damp}. We will show that $u \in H^1(0,T;H_{loc}^2(\Omega))$ and $|\nabla \dot{u}|^{\frac{q-1}{2}}\nabla \dot{u} \in L^2(0,T;H^1_{loc}(\Omega))$. Although $q$-Laplace and parabolic $q$-Laplace equation have been extensively studied in the past (see \cite{lindqvist}, \cite{U}, \cite{Dibendetto}, \cite{Friedman} and references given therein), regularity results in literature on hyperbolic equations with damping of the $q$-Laplace type are sparse and have so far been concerned with local and global well-posedness (see \cite{W}, \cite{Gao}, \cite{BKR13}, \cite{VN}).\\
\indent Secondly, we will consider the coupled problem and show that the solution to \eqref{ModWest_coupled} is piecewise $H^2$ regular in space under the assumption that the gradient of the acoustic pressure remains essentially bounded in space and time. This result is crucial in future numerical approximations of the present problem, as well as in gradient based algorithms for finding the optimal shape of the focusing acoustic lens, where $H^2$ regularity of $u$ is needed in order to express the shape derivative in terms of integrals over the boundary of the lens, see \cite{NK15}.
\subsection{Overview} The paper is organized as follows. We begin in Section \ref{preliminaries} by recalling certain basic results from the theory of finite differences, as well as some helpful inequalities which are needed later. In Section \ref{ModWest_interior_reg} we prove higher interior regularity for the Westervelt equation \eqref{westervelt_damp}. Section \ref{interior_ModWestcoupled} extends these results to the coupled problem. In Section \ref{boundary}, we will show that we can obtain higher regularity up to the boundary of the subdomains if the gradient of the acoustic pressure is essentially bounded in space and time and the subdomains are sufficiently regular.
\section{Preliminaries} \label{preliminaries} \indent In what is to follow, we will need to employ difference quotient approximations to weak derivatives. Assume that $\Omega \subset \mathbb{R}^d$, $d \in \{1,2,3\}$ is an open, connected set with Lipschitz boundary. Let $V \subset \subset \Omega$. $D_r^l$ will stand for the $r$-th difference quotient of size $l$ $$D_r^l u(x,t)=\frac{u(x+l e_r,t)-u(x,t)}{l}, \quad r \in [1,d],$$
for $x \in V$, $l  \in \mathbb{R}$, $ 0<|l|< \frac{1}{2}\text{dist}(V,\partial \Omega)$. Then $D^lu:=(D_1^lu, \ldots,D_d^lu)$. We recall the integration by parts formula for difference quotients $$\int_{V} u \, D_r^l \varphi \, dx =-\int_{V }  D^{-l}_r u \, \varphi \, dx,$$
where $\varphi \in C_c^\infty(V)$, $0<|l|<\frac{1}{2}\text{dist}(V,\partial \Omega)$, as well as the product rule $$D_r^l(\varphi u)=\varphi^l D_r^l u+uD_r^l \varphi,$$
with $\varphi^l(x,t):=\varphi(x+l e_r,t)$. We will also need the following result (cf. Theorem 3, Section 5, \cite{evans}):
\begin{lemma} \label{reg_evans}
\text{\normalfont (a)} Assume $0 \leq q < \infty$ and $u \in W^{1,q+1}(\Omega)$ Then for each $V \subset \subset \Omega$ $$\|D^lu\|_{L^{q+1}(V)} \leq C\|\nabla u\|_{L^{q+1}(\Omega)},$$ for some constant $C$ and all $0<|l|<\frac{1}{2}\text{dist}(V,\partial \Omega)$. \\
 \indent \text{\normalfont (b)} Assume  $0 < q < \infty$, $u \in L^{q+1}(\Omega)$, and there exists a constant $C$ such that $\|D^lu\|_{L^{q+1}(V)} \leq C$ for all  $0<|l|<\frac{1}{2}\text{dist}(V,\partial \Omega)$. Then $$u \in W^{1,q+1}(V), \ \text{with} \ \|\nabla u\|_{L^{q+1}(V)} \leq C.$$
\end{lemma}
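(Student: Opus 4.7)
The plan is to treat (a) by reducing to a pointwise integral representation of the difference quotient for smooth functions and then passing to the limit by density, and to treat (b) by a weak compactness argument that uses reflexivity of $L^{q+1}$ together with the integration by parts formula already recalled above.

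For part (a), first assume $u \in C^\infty(\Omega) \cap W^{1,q+1}(\Omega)$. For $x \in V$ and $0 < |l| < \tfrac{1}{2}\mathrm{dist}(V,\partial\Omega)$, the fundamental theorem of calculus gives
\[
D_r^l u(x) = \frac{1}{l}\int_0^l \partial_r u(x + s e_r)\,ds.
\]
Applying Minkowski's integral inequality (valid since $q+1 \geq 1$) yields
\[
\|D_r^l u\|_{L^{q+1}(V)} \leq \frac{1}{|l|}\int_0^{|l|}\|\partial_r u(\cdot + s e_r)\|_{L^{q+1}(V)}\,ds \leq \|\partial_r u\|_{L^{q+1}(\Omega)},
\]
since $V + s e_r \subset \Omega$ for $|s|$ in this range. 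Summing over $r = 1,\dots,d$ gives the claimed estimate with a dimensional constant. The general case follows by approximating $u$ in $W^{1,q+1}(\Omega)$ by smooth functions and passing to the limit; both sides are continuous in this convergence.

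For part (b), the hypothesis $q > 0$ makes $L^{q+1}(V)$ reflexive, so the uniform bound produces a sequence $l_k \to 0$ and a function $v = (v_1,\dots,v_d) \in L^{q+1}(V;\R^d)$ with $D^{l_k} u \rightharpoonup v$ weakly in $L^{q+1}(V;\R^d)$. For any $\varphi \in C_c^\infty(V)$ and $k$ large enough so that $\mathrm{supp}\,\varphi + l_k e_r \subset V$, the integration by parts formula recalled above yields
\[
\int_V (D_r^{l_k} u)\,\varphi\,dx = -\int_V u\,D_r^{-l_k}\varphi\,dx.
\]
On the left, weak convergence gives $\int_V v_r \varphi\,dx$. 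On the right, $D_r^{-l_k}\varphi \to \partial_r \varphi$ uniformly on $V$ (since $\varphi$ is smooth with compact support) and $u \in L^{q+1}(V) \subset L^1_{\mathrm{loc}}(V)$, so the right-hand side converges to $-\int_V u\,\partial_r \varphi\,dx$. Thus $v_r$ is the weak partial derivative $\partial_r u$ on $V$, so $u \in W^{1,q+1}(V)$, and weak lower semicontinuity of the norm gives $\|\nabla u\|_{L^{q+1}(V)} \leq C$.

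The one point requiring care is the role of the assumption $q > 0$ in (b): if $q = 0$ were allowed, $L^1$ would fail to be reflexive and the weak compactness step would break down. Everything else is standard manipulation; no nonlinear structure is used here, so this part of the argument should be straightforward once the integration by parts identity and the density of smooth functions in $W^{1,q+1}$ are in hand.
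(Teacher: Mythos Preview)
Your proof is correct and follows the standard argument. Note, however, that the paper does not actually supply its own proof of this lemma: it is stated as a recalled result with a reference to Evans (Theorem 3, Section 5.8.2), so there is no paper-internal proof to compare against. Your argument is essentially the one given in that reference.
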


\subsection*{Essential inequalities} Before proceeding further, let us also recall several useful inequalities that we will need when handling the $q$-Laplace damping term. They can be found in Chapter 10, \cite{lindqvist} and Appendix, \cite{LiuYan}. From now on, $C_q$ will be used to denote a generic constant depending only on $q$. For any $x,y \in \mathbb{R}^d$ it holds
\begin{align} \label{ineq1}
||x|^{q-1}x-|y|^{q-1}y|\leq C_q |x-y|(|x|+|y|)^{q-1}, \ q>0,
\end{align}
\begin{align} \label{ineq3}
||x|^{q-1}x-|y|^{q-1}y|\geq 2^{-1}|x-y|^{2}(|x|+|y|)^{q-1} \geq 2^{1-q} |x-y|^{q+1} \geq 0, \ q \geq 1.
\end{align}
\begin{align} \label{ineq4}
\frac{4}{(q+1)^2}||x|^{\frac{q-1}{2}}x-|y|^{\frac{q-1}{2}}y|^2\leq  (|x|^{q-1}x-|y|^{q-1}y)\cdot(x-y), \ q\geq 1,
\end{align}
\begin{align} \label{ineq5}
||x|^{q-1}x-|y|^{q-1}y| \leq q(|x|^{\frac{q-1}{2}}+|y|^{\frac{q-1}{2}})
\left| |x|^{\frac{q-1}{2}}-|y|^{\frac{q-1}{2}}\right|, \ q \geq 1.
\end{align}
\noindent We will also need Young's inequality (see for instance Appendix B, \cite{evans}) in the form
\begin{align} \label{Young}
\quad  \quad \quad |xy| \leq \varepsilon |x|^r+C(\varepsilon, r) |y|^{\frac{r}{r-1}}  \quad (\varepsilon >0, \ 1<r< \infty),
\end{align}
with $C(\varepsilon, r)=(r-1)r^{\frac{r}{r-1}}\varepsilon ^{-\frac{1}{1-r}}$. 

\subsection*{Notation}
By $C_{X,Y}^\Omega$ we will denote the norm of the embedding operator $X(\Omega)\to Y(\Omega)$ between two function spaces over the domain $\Omega$.
 
\section{Interior regularity for the Westervelt equation with strong nonlinear damping} \label{ModWest_interior_reg} 
In this section, we will establish higher interior regularity for the equation \eqref{westervelt_damp} with constant coeffcients. Let us first consider the following Dirichlet problem:
\begin{equation}\label{ModWest_Dirichlet}
\begin{cases}
(1-2ku)\ddot{u}-c^2\Delta u-b\,\text{div}\,(((1-\delta) +\delta|\nabla \dot{u}|^{q-1})\nabla \dot{u})
=2k(\dot{u})^2
 \quad \text{ in } \Omega \times (0,T]   ,
 \\
u|_{\partial \Omega} =0 
\quad \text{ for } t \in (0,T],
\\
(u,\dot{u})|_{t=0}=(u_0, u_1)
 \,\, \text{ in } \Omega, \\
\end{cases}
\end{equation}
with the following assumptions on the coefficients and the exponent $q$:
\begin{align} \label{coeff_ModWest_Dirichlet}
c^2, \ b>0,\ \delta \in (0,1), \ k\in \mathbb{R}, \ q>d-1, \ q \geq 1.
\end{align}
The weak formulation reads as 
\begin{equation}\label{ModWest_Dirichlet_weak}
\begin{cases} 
\int_0^T \int_{\Omega} \{(1-2ku)\ddot{u} \phi+c^2\nabla u \cdot \nabla \phi + b(1-\delta)\nabla \dot{u} \cdot \nabla \phi \vspace{1.5mm}\\
\quad \quad \quad +b\delta|\nabla \dot{u}|^{q-1} \nabla \dot{u} \cdot \nabla \phi-2k(\dot{u})^2 \phi\} \, dx \, ds =0 \vspace{1.5mm}\\
\text{holds for all test functions} \ \phi \in \tilde{X}=L^2(0,T;W_0^{1,q+1}(\Omega)),
\end{cases}
\end{equation}
with $(u,\dot{u})=(u_0,u_1)$. We recall the following well-posedness result (cf. Theorem 2.3, \cite{BKR13}):
\begin{proposition}\label{ModWest_Dirichlet_localw} (Local well-posedness)
Let assumptions \eqref {coeff_ModWest_Dirichlet} hold. For any $T>0$ there is a $\kappa_T>0$ such that for all $u_0,u_1\in W_0^{1,q+1}(\Omega)$ with
\[\begin{aligned}
 |u_1|_{L^2(\Omega)}^2 +|\nabla u_0|_{L^2(\Omega)}^2
+ |\nabla u_1|_{L^2(\Omega)}^2
+ |\nabla u_1|_{L^{q+1}(\Omega)}^{q+1}
+|\nabla u_0|_{L^{q+1}(\Omega)}^2\leq \kappa_T^2
\end{aligned}\]
there exists a weak solution $u\in \cW \subset X$ of \eqref{ModWest_Dirichlet}, where $X=H^2(0,T;L^2(\Omega))\cap C^{0,1}(0,T;W_0^{1,q+1}(\Omega))$, and
\begin{eqnarray}\label{defcW}
\cW =\{v\in X 
&:& \|\ddot{v}\|_{L^2(0,T;L_2(\Omega))}\leq \bar{m}
 \wedge \|\nabla \dot{v}\|_{L^\infty(0,T;L^2(\Omega))}\leq \bar{m}\nonumber\\
&& \wedge \|\nabla \dot{v}\|_{L^{q+1}(0,T;L^{q+1}(\Omega))}\leq \bar{M} \wedge  (v,\dot{v})=(u_0,u_1)\}
\end{eqnarray}
with 
\begin{equation}\label{smallnessMbar}
2|k| C_{W_0^{1,q+1},L^\infty}^\Omega
(\kappa_T+ T^{\frac{q}{q+1}} \bar{M}) <1 
\end{equation}
and $\bar{m}$ sufficiently small, and $u$ is unique in $\cW $.
\end{proposition}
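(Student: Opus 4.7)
I would establish this local well-posedness result by a fixed-point argument on the set $\mathcal{W}$. Given $v\in\mathcal{W}$, consider the linearized problem
\begin{equation*}
(1-2kv)\ddot{u}-c^2\Delta u -b\,\mathrm{div}\bigl(((1-\delta)+\delta|\nabla\dot{u}|^{q-1})\nabla\dot{u}\bigr)=2k\dot{v}\,\dot{u}
\end{equation*}
with homogeneous Dirichlet boundary conditions and the prescribed initial data, and denote its solution by $u=\mathcal{T}v$. Non-degeneracy of the leading coefficient comes from the embedding $W_0^{1,q+1}(\Omega)\hookrightarrow L^\infty(\Omega)$ (valid since $q>d-1$) combined with the bound $\|v(t)\|_{W^{1,q+1}}\leq\kappa_T+T^{q/(q+1)}\bar{M}$ provided by the definition of $\mathcal{W}$; together with \eqref{smallnessMbar} this yields a uniform positive lower bound on $1-2kv$ in space and time.

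To build $u$ for fixed $v$, I would run a Galerkin scheme on a Schauder basis of $W_0^{1,q+1}(\Omega)$. Existence of the finite-dimensional approximations $u^N$ reduces to an ODE system; passage to the limit in the $q$-Laplace principal part uses a Minty monotonicity argument based on \eqref{ineq4}. Two families of a priori estimates are needed. Testing with $\dot{u}^N$ yields bounds on $\nabla u^N$ in $L^\infty(0,T;L^2)$ and on $\nabla\dot{u}^N$ in $L^{q+1}(0,T;L^{q+1})$, while testing with $\ddot{u}^N$ yields bounds on $\ddot{u}^N$ in $L^2(0,T;L^2)$ and on $\nabla\dot{u}^N$ in $L^\infty(0,T;L^2)$, after recognizing the $q$-Laplace pairing as $\tfrac{1}{q+1}\tfrac{d}{dt}\|\nabla\dot{u}^N\|_{L^{q+1}}^{q+1}$. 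Young's inequality \eqref{Young} is used repeatedly to absorb the lower-order terms $2kv\ddot{u}\dot{u}$ and $2k\dot{v}\dot{u}^2$, which is feasible thanks to the smallness of $\kappa_T$, $\bar{m}$ and $\bar{M}$.

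The self-mapping property $\mathcal{T}(\mathcal{W})\subseteq\mathcal{W}$ is then obtained by choosing the thresholds $\kappa_T$, $\bar{m}$ and $\bar{M}$ so that the constants produced by the energy estimates above are consistent with the defining bounds of $\mathcal{W}$; condition \eqref{smallnessMbar} is precisely what makes such a choice possible. For the fixed point itself I would subtract the equations satisfied by $u_i=\mathcal{T}v_i$, $i=1,2$, test with $\dot{u}_1-\dot{u}_2$, and use the monotonicity inequality \eqref{ineq3} to discard the sign-definite $q$-Laplace difference. The remaining terms are estimated in the weaker norm of $C(0,T;H_0^1(\Omega))\cap H^1(0,T;L^2(\Omega))$, yielding a strict contraction for $T$, $\kappa_T$ and $\bar{m}$ sufficiently small. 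Since $\mathcal{W}$ is closed in this weaker topology, Banach's fixed-point theorem supplies a unique $u\in\mathcal{W}$.

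The main technical obstacle throughout is the strongly nonlinear damping. In the construction step it forces one to abandon the standard linear energy method in favor of monotone-operator machinery; in the contraction step one cannot hope for Lipschitz continuity in $\nabla\dot{u}$, but the crucial point is that the dissipation provided by \eqref{ineq3} can only help the difference estimate, so it suffices to control the genuinely quasilinear factor $(1-2kv)\ddot{u}$ and the source term $2k\dot{v}^2$ — and this is exactly where the smallness hypothesis \eqref{smallnessMbar} enters decisively.
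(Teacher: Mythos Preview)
The paper does not actually prove this proposition: it is stated with the preamble ``We recall the following well-posedness result (cf.\ Theorem 2.3, \cite{BKR13})'' and no proof is given in the present paper. So there is no in-paper argument to compare against.

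That said, your outline is the standard route and matches the strategy of the cited reference \cite{BKR13}: a Banach fixed-point argument on $\mathcal{W}$, with the auxiliary problem solved via Galerkin approximation and Minty's trick for the monotone $q$-Laplace part, energy estimates obtained by testing with $\dot u$ and $\ddot u$, and contractivity shown in a weaker topology using \eqref{ineq3} to discard the damping difference. The non-degeneracy bound you derive from $W_0^{1,q+1}\hookrightarrow L^\infty$ is exactly the mechanism the paper singles out in \eqref{degeneracy}. Nothing in your sketch is off; it is simply a proof the paper chose to import rather than reproduce.
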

In \cite{BKR13}, the issue of possible degeneracy of the Westervelt equation due to the factor $1-2ku$ is resolved by means of the embedding $W_0^{1,q+1}(\Omega) \hookrightarrow L^{\infty}(\Omega)$, valid for $q>d-1$, and the following estimate 
\begin{align*}
|u(x,t)|\leq&\, C^{\Omega}_{W_0^{1,q+1},L^{\infty}}|\nabla u(t)|_{L^{q+1}(\Omega)}\\
\leq& \, C^{\Omega}_{W_0^{1,q+1},L^{\infty}}|\nabla u_0+\int_0^t \nabla \dot{u} \, ds \,|_{L^{q+1}(\Omega)}\\
\leq&\, C^{\Omega}_{W_0^{1,q+1},L^{\infty}}\Bigl(|\nabla u_0|_{L^{q+1}(\Omega)}+\Bigl(t^q \int_0^t \int_\Omega |\nabla \dot{u}(y,s)|^{q+1}\, dy \, ds \Bigr)^{\frac{1}{q+1}}\Bigr),
\end{align*}
which leads to the bound
\begin{equation} \label{degeneracy}
\begin{aligned}
&1-a_0< 1-2ku < 1+a_0, \\
& a_0:= 2|k|C^{\Omega}_{W_0^{1,q+1},L^{\infty}}(|\nabla u_0|_{L^{q+1}(\Omega)}+T^{\frac{q}{q+1}}\|\nabla \dot{u}\|_{L^{q+1}(0,T;L^{q+1}(\Omega))}). 
\end{aligned}
\end{equation}
Due to the embedding $W^{1,q+1}(\Omega) \hookrightarrow C^{0,1-\frac{d}{q+1}}(\overline{\Omega})$, we also know that $u$ is H\" older continuous in space, i.e. $u \in C^{0,1}(0,T;C^{0,1-\frac{d}{q+1}}(\overline{\Omega}))$.
\subsection{Higher interior regularity} We will establish higher interior regularity by following the difference-quotient approach (see, for instance, Theorem 4, Section 6.3.2, \cite{evans}). Let us denote
\begin{align*}
F=|\nabla \dot{u}|^{\frac{q-1}{2}}\nabla \dot{u}, \quad F^l=|\nabla \dot{u}^l|^{\frac{q-1}{2}}\nabla \dot{u}^l.
\end{align*}
As a by-product of the following proof we will also obtain $F \in H^{1}_{loc}(\Omega)$ by adapting the idea of Bojarski and Iwaniec for $q$-harmonic functions (see, for instance, Section 4, \cite{lindqvist}) to our model.
\begin{theorem} \label{thm:interior_reg} (Higher interior regularity) Let assumptions \eqref{coeff_ModWest_Dirichlet} hold true, $u_0 \in H^2(\Omega) \cap W_0^{1,q+1}(\Omega)$, $u_1 \in W_0^{1,q+1}(\Omega)$, and let $u$ be the weak solution of \eqref{ModWest_Dirichlet}. Then $u \in H^1(0,T;H^2_{loc}(\Omega))$ and $|\nabla \dot{u}|^{\frac{q-1}{2}}\nabla \dot{u} \in L^2(0,T;H^1_{loc}(\Omega))$.
\end{theorem}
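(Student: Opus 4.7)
The plan is to follow the difference-quotient method of Evans (Section 6.3.2, \cite{evans}) combined with a Bojarski--Iwaniec type substitution from $p$-harmonic theory in order to handle the $q$-Laplace damping. Fix open sets $V\subset\subset V'\subset\subset\Omega$ and a cutoff $\eta\in C_c^\infty(V')$ with $\eta\equiv 1$ on $V$. For each $r\in\{1,\dots,d\}$ and each $|l|$ small relative to $\mathrm{dist}(\mathrm{supp}\,\eta,\partial\Omega)$, the function $\phi=-D_r^{-l}(\eta^2 D_r^l\dot u)\,\chi_{[0,t]}(s)$ lies in $\tilde X=L^2(0,T;W_0^{1,q+1}(\Omega))$, so it is admissible in \eqref{ModWest_Dirichlet_weak}. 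After shifting $D_r^{-l}$ onto the other factor via the integration-by-parts formula for difference quotients, and expanding $\nabla(\eta^2 D_r^l\dot u)=2\eta\nabla\eta\, D_r^l\dot u+\eta^2 D_r^l\nabla\dot u$, each term of the weak form splits into a \emph{structural} piece involving $\eta^2 D_r^l\nabla\dot u$ and a \emph{commutator} piece carrying the factor $\eta\nabla\eta\, D_r^l\dot u$.

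The structural pieces assemble into an energy inequality whose left-hand side, at the running time $t$, is bounded below by
\begin{equation*}
\tfrac{1-a_0}{2}\!\int_\Omega\!\eta^2|D_r^l\dot u(t)|^2\,dx +\tfrac{c^2}{2}\!\int_\Omega\!\eta^2|D_r^l\nabla u(t)|^2\,dx +b(1-\delta)\!\int_0^t\!\!\int_\Omega\!\eta^2|D_r^l\nabla\dot u|^2\,dx\,ds +\tfrac{4b\delta}{(q+1)^2}\!\int_0^t\!\!\int_\Omega\!\eta^2|D_r^l F|^2\,dx\,ds.
\end{equation*}
The first two terms come from integrating $(1-2ku)^l D_r^l\ddot u\cdot D_r^l\dot u$ and $c^2\eta^2 D_r^l\nabla u\cdot D_r^l\nabla\dot u=\tfrac{c^2}{2}\partial_t(\eta^2|D_r^l\nabla u|^2)$ by parts in time; coercivity of the $\dot u$-term uses the non-degeneracy bound \eqref{degeneracy}. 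The fourth term arises by applying \eqref{ineq4} pointwise with $x=\nabla\dot u^l$, $y=\nabla\dot u$ to the scalar product $D_r^l(|\nabla\dot u|^{q-1}\nabla\dot u)\cdot D_r^l\nabla\dot u$. The initial-data contributions on the right are finite because $u_0\in H^2(\Omega)$, which via Lemma \ref{reg_evans}(a) gives $\|D_r^l\nabla u_0\|_{L^2}\leq C\|\nabla^2 u_0\|_{L^2}$, and $u_1\in W_0^{1,q+1}(\Omega)$.

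The remainder consists of the product-rule leftover $-2k\!\int(D_r^l u)\ddot u\,\eta^2 D_r^l\dot u$ (from $D_r^l((1-2ku)\ddot u)=(1-2ku)^l D_r^l\ddot u-2k(D_r^l u)\ddot u$), the source contribution $-2k\!\int(\dot u+\dot u^l)D_r^l\dot u\,\eta^2 D_r^l\dot u$, the time-by-parts surplus $k\!\int\!\!\int\dot u^l\,\eta^2|D_r^l\dot u|^2$, and the four commutator pieces carrying $\eta\nabla\eta\, D_r^l\dot u$. The first three are routinely handled by Young's inequality \eqref{Young} using $\dot u\in L^\infty(0,T;L^\infty(\Omega))$ (from $\cW$ and $W^{1,q+1}\hookrightarrow L^\infty$), $\ddot u\in L^2(0,T;L^2(\Omega))$, and $D_r^l u\in L^\infty(0,T;L^{q+1}(V'))$ via Lemma \ref{reg_evans}(a). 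The main obstacle is the nonlinear commutator
\begin{equation*}
2b\delta\!\int_0^t\!\!\int_\Omega\eta\,\nabla\eta\cdot D_r^l\bigl(|\nabla\dot u|^{q-1}\nabla\dot u\bigr)\,D_r^l\dot u\,dx\,ds.
\end{equation*}
To dominate it I would invoke the pointwise equivalence $|x-y|^2(|x|+|y|)^{q-1}\simeq_q \bigl||x|^{(q-1)/2}x-|y|^{(q-1)/2}y\bigr|^2$ that follows from \eqref{ineq3}--\eqref{ineq5}, together with \eqref{ineq1}, obtaining $|D_r^l(|\nabla\dot u|^{q-1}\nabla\dot u)|\leq C_q|D_r^l F|\,(|\nabla\dot u^l|+|\nabla\dot u|)^{(q-1)/2}$; Young's inequality then splits the commutator into $\varepsilon\eta^2|D_r^l F|^2$, absorbed on the left, plus $C_\varepsilon|\nabla\eta|^2|D_r^l\dot u|^2(|\nabla\dot u^l|+|\nabla\dot u|)^{q-1}$, controlled via the $L^\infty(0,T;L^{q+1})$ bound on $\nabla\dot u$ in $\cW$.

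Choosing $\varepsilon$ small, and invoking smallness of $\bar m,\bar M$ in \eqref{smallnessMbar} to absorb the $k$-dependent pieces, produces an estimate on the four structural quantities that is uniform in $l$. Lemma \ref{reg_evans}(b), applied direction-by-direction in $r$, then yields $\nabla u\in L^\infty(0,T;H^1(V))$, $\nabla\dot u\in L^2(0,T;H^1(V))$, and $F\in L^2(0,T;H^1(V))$. Since $V\subset\subset\Omega$ was arbitrary, $u\in H^1(0,T;H^2_{\mathrm{loc}}(\Omega))$ and $|\nabla\dot u|^{(q-1)/2}\nabla\dot u\in L^2(0,T;H^1_{\mathrm{loc}}(\Omega))$, as claimed.
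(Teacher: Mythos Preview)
Your proposal is correct and follows essentially the same route as the paper's proof: the same cutoff-and-difference-quotient test function $\phi=-D_r^{-l}(\eta^2 D_r^l\dot u)\chi_{[0,t)}$, the same use of \eqref{ineq4} to extract the coercive $|D_r^l F|^2$ term on the left, the same use of \eqref{ineq5} to bound the nonlinear commutator (your pointwise estimate $|D_r^l(|\nabla\dot u|^{q-1}\nabla\dot u)|\leq C_q(|\nabla\dot u^l|+|\nabla\dot u|)^{(q-1)/2}|D_r^l F|$ is precisely what the paper derives from \eqref{ineq5}), and the same Young-and-absorb closure via smallness of $\bar m$. The only differences are organizational: the paper applies H\"older with exponents $q+1$, $\tfrac{2(q+1)}{q-1}$, $2$ \emph{before} Young on the nonlinear commutator, and for the term $\int (D_r^l u)\,\ddot u\,\eta^2 D_r^l\dot u$ it uses the embedding $H^1\hookrightarrow L^4$ and absorbs $\|\zeta D_r^l\nabla u\|_{L^\infty(L^2)}$ and $\|\zeta D_r^l\nabla\dot u\|_{L^2(L^2)}$ back into the left-hand side via smallness of $\|\ddot u\|_{L^2(L^2)}\leq\bar m$, rather than invoking the $L^{q+1}$ bound you mention; the effect is the same.
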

\begin{proof}
Choose any open set $V \subset \subset \Omega$ and an open set $W$ such that $V \subset \subset W \subset \subset \Omega$. We then introduce a smooth cut-off function $\zeta$ such that
\begin{align*}
\begin{cases}
\zeta=1 \ \text{on} \ V, \ \zeta=0 \ \text{on} \ \Omega \setminus W, \\
0 \leq \zeta \leq 1.
\end{cases}
\end{align*}
Let $|l|>0$ be small and choose $r \in \{1,\ldots, d\}$. We are then allowed to use $$\phi:=-D_r^{-l}(\zeta^2 D_r^l \dot{u})\one_{[0,t)}, \ \ t \in[0,T]$$ as a test function in \eqref{ModWest_Dirichlet_weak}, which results in
\begin{align} \label{interior:est1}
&\frac{1}{2}\Bigl[\int_{\Omega_i}(1-2ku^l)(\zeta D_r^l \dot{u})^2\, dx\Bigr]_0^t+\frac{1}{2}c^2\Bigl[\int_{\Omega}|\zeta D_r^l \nabla u|^2\, dx\Bigr]_0^t \nonumber\\
&+b(1-\delta)\int_0^T \int_{\Omega_i}|\zeta D_r^l \nabla \dot{u}|^2\, dx \, ds 
+\frac{4}{(q+1)^2}b \delta\int_0^t \int_{\Omega}  |\zeta D_r^l F|^{2} \, dx \, ds\nonumber\\ 
\leq& \,2k\int_0^t \int_{\Omega}D_r^l u \, \ddot{u} \,  \zeta^2 D_r^l \dot{u} \, dx \, ds 
 -c^2\int_0^t \int_{\Omega} \zeta \nabla \zeta \cdot D_r^l \nabla u\, D_r^l \dot{u} \, dx \, ds  \nonumber\\
&-2b(1-\delta)\int_0^t \int_{\Omega} \zeta \nabla \zeta \cdot D_r^l \nabla \dot{u}\, D_r^l \dot{u}_i \, dx \, ds \\
&-2b\delta_i \int_0^t \int_{\Omega} D_r^l(|\nabla \dot{u}|^{q-1}\nabla \dot{u}) \cdot \zeta \nabla \zeta\,  D_r^l \dot{u} \, dx \, ds 
+k\int_0^t \int_{\Omega_i}(\dot{u}^l+2\dot{u})(\zeta D_r^l \dot{u})^2 \, dx \, ds. \nonumber
\end{align}
Here we have made use of the estimate
\begin{align*}
\int_0^t \int_{\Omega}\zeta^2 D_r^l(|\nabla \dot{u}|^{q-1}\nabla \dot{u}) D_r^l\nabla \dot{u}  \, dx \, ds \geq \frac{4}{(q+1)^2}\int_0^t \int_{\Omega}  \frac{1}{l^2}\zeta^2||\nabla \dot{u}^l|^{\frac{q-1}{2}}\nabla \dot{u}^l-|\nabla \dot{u}|^{\frac{q-1}{2}}\nabla \dot{u}|^{2}\, dx \, ds,
\end{align*}
which follows from \eqref{ineq4}. Next, we estimate the terms on the right hand side containing $\zeta \nabla \zeta$. We have 
\begin{align*}
& - c^2\int_0^t \int_{\Omega} \zeta \nabla \zeta \cdot D_r^l \nabla u D_r^l \dot{u} \, dx \, ds-2b(1-\delta)\int_0^t \int_{\Omega} \zeta \nabla \zeta \cdot D_r^l \nabla \dot{u} D_r^l \dot{u} \, dx \, ds  \\
\leq& \, C \int_0^T \int_{\Omega} \zeta (|D_r^l \nabla u| + |D_r^l \nabla \dot{u}| )|\nabla \zeta D_r^l \dot{u}|\, dx \, ds \\
\leq& \, \varepsilon T\|\zeta D_r^l \nabla u\|^2_{L^\infty(0,T;L^2(\Omega))}+ \varepsilon\|\zeta D_r^l \nabla \dot{u}\|^2_{L^2(0,T;L^2(\Omega))}+ \frac{C}{\varepsilon}\|\nabla \dot{u}\|^2_{L^2(0,T;L^2(\Omega))},
\end{align*}
where $C$ depends on $c, b, \delta$ and $|\nabla \zeta|_{L^\infty(W)}$ and we have used Lemma \ref{reg_evans}, (a). By employing estimate \eqref{ineq5} and H\" older's inequality we obtain
\begin{align*}
&-2b\delta \int_0^t \int_{\Omega} D_r^l(|\nabla \dot{u}|^{q-1}\nabla \dot{u})\cdot \zeta \nabla \zeta \, D_r^l \dot{u} \, dx \, ds \\
\leq& \, 2 b \delta \int_0^T \int_{\Omega} \frac{1}{l}|D_r^l \dot{u}|
\left|
|\nabla \dot{u}^l|^{q-1}\nabla \dot{u}^l-|\nabla \dot{u}|^{q-1}\nabla \dot{u}
\right|
\zeta|\nabla \zeta|\, dx \, ds \\
\leq&\, 2 q b \delta \int_0^T \int_{\Omega}|\nabla \zeta D_r^l \dot{u}|(|\nabla \dot{u}^l|^{\frac{q-1}{2}}+|\nabla \dot{u}|^{\frac{q-1}{2}})|\zeta D_r^l F| \, dx \, ds \\
\leq&\, 2 q b \delta \Bigl\{ \int_0^T \int_{\Omega}|\nabla \zeta D_r^l \dot{u}|^{q+1}\, dx \, ds \Bigr \}^{\frac{1}{q+1}}\Bigl\{ \int_0^T \int_{\text{supp} \zeta} (|\nabla \dot{u}^l|^{\frac{q-1}{2}}+|\nabla \dot{u}|^{\frac{q-1}{2}})^{\frac{2(q+1)}{q-1}}\, dx\, ds\Bigr\}^{\frac{q-1}{2(q+1)}}\\
& \quad \quad \quad \times \Bigl\{\int_0^T \int_{\Omega}|\zeta D_r^l F|^2 \, dx \, ds \Bigr\}^{1/2}.
\end{align*}
The second integral can be majorized with the help of Minkowski's inequality by 
\begin{align*}
&\Bigl\{ \int_0^T \int_{\text{supp} \zeta} (|\nabla \dot{u}^l|^{\frac{q-1}{2}}+|\nabla \dot{u}|^{\frac{q-1}{2}})^{\frac{2(q+1)}{q-1}}\, dx\, ds\Bigr\}^{\frac{q-1}{2(q+1)}} \\
\leq&\, \Bigl\{ \int_0^T \int_{\text{supp} \zeta} (|\nabla \dot{u}^l|^{q+1} \, dx \, ds\Bigr\}^{\frac{q-1}{2(q+1)}} + \Bigl\{ \int_0^T \int_{\text{supp} \zeta} (|\nabla \dot{u}|^{q+1} \, dx \, ds\Bigr\}^{\frac{q-1}{2(q+1)}} \\
\leq&\, 2  \Bigl\{ \int_0^T \int_{\Omega} (|\nabla \dot{u}|^{q+1} \, dx \, ds\Bigr\}^{\frac{q-1}{2(q+1)}}=2\|\nabla \dot{u}\|^{\frac{q-1}{2}}_{L^{q+1}(0,T;L^{q+1}(\Omega))},
\end{align*}
for small $|l|$. Utilizing Young's inequality  then yields 
\begin{align*}
&-2b\delta \int_0^t \int_{\Omega_i} D_r^l(|\nabla \dot{u}|^{q-1}\nabla \dot{u})\cdot \zeta \nabla \zeta \, D_r^l \dot{u} \, dx \, ds \\
\leq&\, b\delta \varepsilon \int_0^T \int_{\Omega}|\zeta D_r^l F|^2 \, dx \, ds +C(\|\nabla \dot{u}\|^{q+1}_{L^{q+1}(0,T;L^{q+1}(W))}+\|D_r^l \dot{u}\|^{q+1}_{L^{q+1}(0,T;L^{q+1}(W))}),
\end{align*}
where $C>0$ depends on $b_i$, $\delta_i$, $\varepsilon$, $q$ and $|\nabla \zeta|_{L^{\infty}(W)}$. Note that the first term in the last line can be absorbed by the $b_i\delta_i$- term on the left hand side in \eqref{interior:est1} for sufficiently small $\varepsilon>0$. The two remaining terms on the right hand side in \eqref{interior:est1} can be estimated as follows
\begin{align*}
&k\int_0^t \int_{\Omega}(\dot{u}^l+2\dot{u})\zeta^2(D_r^l \dot{u})^2 \, dx \, ds+2k\int_0^t \int_{\Omega}\zeta^2 \ddot{u}\, D_r^l u \, D_r^l \dot{u} \, dx \, ds\\
\leq& \,C\Bigl(\|\dot{u}\|_{L^\infty(0,T;L^\infty(\Omega))}\|\zeta D_r^l \dot{u}\|^2_{L^2(0,T;L^2(W))}\\
&+(C_{H^1,L^4}^{\Omega})^2\|\ddot{u}\|_{L^2(0,T;L^2(\Omega))}\|\zeta D_r^l u\|_{L^\infty(0,T;H^1(\Omega))}\|\zeta D_r^l \dot{u}\|_{L^2(0,T;H^1(\Omega))}\Bigr) \\
\leq& C\Bigl(\|\dot{u} \|_{L^\infty(0,T;L^\infty(\Omega))}\|D_r^l \dot{u}\|^2_{L^2(0,T;L^2(\Omega))}\\
&+\bar{m}^2\frac{1}{4\varepsilon}(\|D_r^l u\|^2_{L^\infty(0,T;L^2(W))}+\|\zeta D_r^l \nabla u\|^2_{L^\infty(0,T;L^2(\Omega))}) \\
&+\varepsilon\|D_r^l \dot{u}\|^2_{L^2(0,T;L^2(W))}+\varepsilon\|\zeta D_r^l \nabla \dot{u}\|^2_{L^2(0,T;L^2(\Omega))}\Bigr)\, .
\end{align*}
Altogether, for sufficiently small $\varepsilon>0$ and $\bar{m}$, we can achieve that
\begin{align*}
&\|\zeta D_r^l \dot{u}\|^2_{L^\infty(0,T;L^2(\Omega))}+\|\zeta D_r^l \nabla u_i\|^2_{L^\infty(0,T;L^2(\Omega_i))}+\|\zeta D_r^l \nabla \dot{u}\|^2_{L^2(0,T;L^2(\Omega))}\\
&+\|\zeta D_r^l F\|^{q+1}_{L^{2}(0,T;L^{2}(\Omega))}\\
\leq& \, C (\|\nabla \dot{u}\|^{q+1}_{L^{\infty}(0,T;L^\infty(\Omega))}+\|D_r^l \dot{u}\|^{q+1}_{L^{q+1}(0,T;L^{q+1}(W))}+\|\dot{u}\|_{L^\infty(0,T;L^\infty(\Omega))}\|D_r^l \dot{u}\|^2_{L^2(0,T;L^2(W))}\\
&+\|D_r^l u\|^2_{L^\infty(0,T;L^2(W))}+\|D_r^l \dot{u}\|^2_{L^2(0,T;L^2(W))}+|D_r^l u_1|^2_{L^2(\Omega)}+|D_r^l \nabla u_0|^2_{L^2(\Omega)}).
\end{align*}
By remembering the definition of $\zeta$ and Lemma \ref{reg_evans}, we finally arrive at
\begin{align*}
&\| D_r^l \dot{u}_i\|^2_{L^\infty(0,T;L^2(V))}+\| D_r^l \nabla u\|^2_{L^\infty(0,T;L^2(V))}
+\|D_r^l \nabla \dot{u}\|^2_{L^2(0,T;L^2(V))}+\|D_r^l F\|^{2}_{L^{2}(0,T;L^{2}(V))}\\
\leq& \, C (\|\nabla \dot{u}\|^{q+1}_{L^{\infty}(0,T;L^\infty(\Omega))}+(1+\|\dot{u} \|_{L^\infty(0,T;L^\infty(\Omega))})\|\nabla \dot{u}\|^2_{L^2(0,T;L^2(\Omega))}+\|\nabla u\|^2_{L^\infty(0,T;L^2(\Omega))}\\
&+|\nabla u_1|^2_{L^2(\Omega)}+|u_0|^2_{H^2(\Omega)}),
\end{align*}
for $r \in [1,d]$, sufficiently small $|l|>0 $ and sufficiently large $C>0$ which does not depend on $l$. By employing Lemma \ref{reg_evans}, we can conclude that $u \in H^{1}(0,T;H^{2}_{loc}(\Omega))$ and $|\nabla \dot{u}|^{\frac{q-1}{2}}\nabla \dot{u} \in L^2(0,T;H^1_{loc}(\Omega))$.
\end{proof}
\indent As a simple consequence of the previous proposition, we can obtain H\" older continuity of $u$ (see Section 4, \cite{lindqvist}). Indeed, since $F \in L^2(0,T;H^1_{loc}(\Omega))$ and $d \in \{1,2,3\}$, due to Sobolev's embedding theorem we have that $F \in L^2(0,T;L^{6}_{loc}(\Omega))$. This implies that $u \in W^{1,q+1}(0,T;W^{1,3(q+1)}_{loc}(\Omega))$. We can than conclude that $u \in W^{1,q+1}(0,T;C^{0,\alpha}_{loc}(\Omega))$, where $\alpha=1-\frac{d}{3(q+1)}$. \\
\indent 
When $d \in  \{1,2\}$ we can do even better. According to Sobolev's embedding theorem, $u \in W^{1,q+1}(0,T;C_{loc}^{1,\frac{1}{2}}(\Omega))$ if $d=1$, and $u \in W^{1,q+1}(0,T;C_{loc}^{0,\gamma}(\Omega))$ if $d=2$, where $\gamma \in (0,1)$. Altogether, we have
\begin{corollary} Let the assumptions of Theorem \ref{thm:interior_reg} hold true. Then 
\begin{align} \label{H_reg}
u \in \begin{cases}
 W^{1,q+1}(0,T;C^{0,1-\frac{1}{q+1}}_{\text{loc}}(\Omega)) \ \text{if} \ d=3, \vspace{1mm} \\
 W^{1,q+1}(0,T;C^{0,\gamma}_{\text{loc}}(\Omega)) \ \text{with} \ \gamma \in (0,1), \ \text{if} \ d=2, \vspace{1mm} \\
 W^{1,q+1}(0,T;C^{1,\frac{1}{2}}_{\text{loc}}(\Omega)) \ \text{if} \ d=1.
 \end{cases}
\end{align}
\end{corollary}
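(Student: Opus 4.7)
The corollary would follow directly from Theorem \ref{thm:interior_reg} via dimension-dependent Sobolev embeddings. My plan is to use the pointwise identity $|F|^2 = |\nabla \dot{u}|^{q+1}$ for $F = |\nabla \dot{u}|^{(q-1)/2}\nabla \dot{u}$ to transfer spatial integrability of $F$ to spatial integrability of $\nabla \dot{u}$, and then apply Morrey's inequality to deduce the announced H\"older exponents.

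First, I would fix an open set $V \subset\subset \Omega$ and apply a Sobolev embedding to $H^1(V)$: in $d = 3$, $H^1(V) \hookrightarrow L^{6}(V)$; in $d = 2$, $H^1(V) \hookrightarrow L^{p}(V)$ for any $p \in [1, \infty)$. Theorem \ref{thm:interior_reg} then gives $F \in L^2(0, T; L^{p_d}_{\text{loc}}(\Omega))$ with $p_d$ as above. The identity $\|\nabla \dot{u}\|_{L^{p_d(q+1)/2}(V)}^{q+1} = \|F\|_{L^{p_d}(V)}^{2}$, which is immediate from $|F|^{p_d} = |\nabla \dot{u}|^{p_d(q+1)/2}$, integrated in time would yield $\nabla \dot{u} \in L^{q+1}(0, T; L^{p_d(q+1)/2}_{\text{loc}}(\Omega))$, and hence $\dot{u} \in L^{q+1}(0, T; W^{1, p_d(q+1)/2}_{\text{loc}}(\Omega))$. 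Combining this with the representation $u(t) = u_0 + \int_0^t \dot{u}(s)\,ds$ and the regularity of the initial datum $u_0 \in H^2 \cap W^{1,q+1}_0$ would transfer the same spatial bound to $u$ itself, giving $u \in W^{1, q+1}(0, T; W^{1, p_d(q+1)/2}_{\text{loc}}(\Omega))$.

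Next, I would apply Morrey's inequality $W^{1, r}(V) \hookrightarrow C^{0, 1 - d/r}(\bar V)$ with $r = p_d(q+1)/2$: in $d = 3$, $r = 3(q+1)$ produces the exponent $\alpha = 1 - 1/(q+1)$; in $d = 2$, choosing $p_2$ arbitrarily large covers any $\gamma \in (0, 1)$. For $d = 1$, I would take a slightly different route: since Theorem \ref{thm:interior_reg} also yields $\dot{u} \in L^2(0, T; H^2_{\text{loc}}(\Omega))$, the one-dimensional embedding $H^2(V) \hookrightarrow C^{1, 1/2}(\bar V)$ gives $\dot{u} \in L^2(0, T; C^{1, 1/2}_{\text{loc}}(\Omega))$ directly, and integration in time transfers this to $u$.

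I expect the main subtle point to be reconciling the $d = 1$ case with the target time exponent, since the $H^2$-embedding supplies only $L^2$-in-time control, whereas the corollary is stated with $W^{1, q+1}$ in time. This would be handled by combining that $L^2$-in-time regularity with the $L^\infty$-in-space bound furnished by Step 2 applied in one dimension (using $H^1(V) \hookrightarrow L^\infty(V)$ to get $\nabla \dot{u} \in L^{q+1}(0,T; L^\infty_{\text{loc}}(\Omega))$), possibly through a Gagliardo--Nirenberg-type interpolation between the bound $\|\nabla \dot{u}\|_{L^\infty(0,T;L^2(\Omega))} \leq \bar m$ of Proposition \ref{ModWest_Dirichlet_localw} and the $L^2(0, T; H^2_{\text{loc}})$-regularity just recalled.
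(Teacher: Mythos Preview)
Your approach is essentially the paper's own: the paper also uses $F\in L^2(0,T;H^1_{\mathrm{loc}})$ together with $H^1\hookrightarrow L^6$ (and, in $d=2$, $H^1\hookrightarrow L^p$ for all $p<\infty$) plus the identity $|F|^{p}=|\nabla\dot u|^{p(q+1)/2}$ to obtain $u\in W^{1,q+1}(0,T;W^{1,3(q+1)}_{\mathrm{loc}})$ and then applies Morrey's embedding; for $d=1$ it simply invokes the one-dimensional Sobolev embedding $H^2\hookrightarrow C^{1,1/2}$.

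You are in fact being more careful than the paper on the $d=1$ case: the paper just writes ``according to Sobolev's embedding theorem, $u\in W^{1,q+1}(0,T;C^{1,1/2}_{\mathrm{loc}})$'' without commenting on the time exponent, whereas the $H^2_{\mathrm{loc}}$-regularity from Theorem~\ref{thm:interior_reg} is only $H^1$ (i.e.\ $L^2$) in time. Your flagging of this point and the suggestion to interpolate is a genuine addition rather than a deviation; note however that the interpolation you sketch does not obviously close, since the stronger spatial norm $C^{1,1/2}$ is controlled only in $L^2_t$ while $L^{q+1}_t$ is available only for the weaker $W^{1,\infty}$ norm. The paper does not resolve this either, so your proposal matches (and slightly sharpens the discussion of) the paper's argument.
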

\subsection{Neumann problem for the Westervelt equation} Let us also consider the Neumann problem for the Westervelt equation with strong nonlinear damping:
\begin{align} \label{ModWest_Neumann}
\begin{cases}
(1-2ku)\ddot{u}-c^2\Delta u-b\,\text{div}(((1-\delta) +\delta|\nabla \dot{u}|^{q-1})\nabla \dot{u}) 
 =2k(\dot{u})^2 
 \, \text{ in } \Omega \times (0,T],
\vspace{2mm} \\
c^2 \frac{\partial u}{\partial n}+b((1-\delta)+\delta|\nabla \dot{u}|^{q-1})\frac{\partial \dot{u}}{\partial n}=g  \ \ \text{on} \ \partial\Omega \times (0,T],
\vspace{2mm} \\
(u,\dot{u})\vert_{t=0}=(u_0, u_1) \ \ \text{on} \ \overline{\Omega},
,\\
\end{cases}
\end{align}
with the same assumptions \eqref{coeff_ModWest_Dirichlet} on coefficients. Problem \eqref{ModWest_Neumann} is locally well-posed thanks to the following result (cf. Theorem 2.5, \cite{VN}):
\begin{proposition} \label{thm:W1}
Let $g \in L^{\infty }(0,T;W^{-\frac{q}{q+1},\frac{q+1}{q}}(\partial \Omega))$, $\dot{g} \in L^{\frac{q+1}{q}}(0,T;W^{-\frac{q}{q+1},\frac{q+1}{q}}(\partial \Omega))$, and $u_0, u_1 \in W^{1,q+1}(\Omega)$. 
For sufficiently small initial and boundary data, final time $T$ and $\bar{m}$ and $\bar{M}$ there exists a unique weak solution $u \in \cW \subset X$ of \eqref{ModWest_Neumann}, where $X=H^2(0,T;L^2(\Omega))\cap C^{0,1}(0,T;W^{1,q+1}(\Omega))$, and
\begin{equation}\label{defcW1}
\begin{split}
\cW =\{v\in X 
:& \ \|\ddot{v}\|_{L^2(0,T;L^2(\Omega))}\leq \bar{m}  \wedge \| \dot{v}\|_{L^{\infty}(0,T;H^1(\Omega))}\leq \bar{m}\\
& \wedge \| \nabla \dot{v}\|_{L^{q+1}(0,T;L^{q+1}(\Omega))}\leq \bar{M} 
\}.
\end{split}
\end{equation}
\end{proposition}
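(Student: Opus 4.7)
The plan is to run a Banach fixed-point argument on the closed convex set $\cW$, in close analogy with the Dirichlet case recalled in Proposition \ref{ModWest_Dirichlet_localw}, but adapted to the Neumann boundary condition. Define a map $\mathcal{T}: v \mapsto u$ where, for fixed $v \in \cW$, $u$ solves the auxiliary problem obtained by freezing $v$ in the coefficient $(1-2kv)$ and in the source $2k(\dot{v})^2$, while leaving the monotone $q$-Laplace damping term intact on the left-hand side. Non-degeneracy of the coefficient $1-2kv$ is secured by the embedding $W^{1,q+1}(\Omega) \hookrightarrow L^\infty(\Omega)$ applied to $v$, exactly as in the estimate \eqref{degeneracy}, provided a smallness condition on the data and on $\bar M$ is imposed. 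Solvability of this auxiliary problem follows from Galerkin approximation combined with monotone-operator theory for the $q$-Laplacian; $\ddot{u}$ is then produced by formally differentiating the equation in time and repeating the argument at the level of the time-derivative.

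The core quantitative step is the pair of energy estimates needed to show $\mathcal{T}(\cW) \subset \cW$. Testing the auxiliary equation with $\dot{u}$ and using \eqref{ineq3}--\eqref{ineq4} extracts a coercive contribution from both the linear and the $q$-Laplace damping. The boundary pairing $\langle g, \dot{u}\rangle_{\partial \Omega}$ is then absorbed via Young's inequality \eqref{Young} into the $L^{q+1}(0,T;W^{1,q+1}(\Omega))$-norm of $\dot{u}$ using the trace embedding $W^{1,q+1}(\Omega) \to W^{1-\frac{1}{q+1},q+1}(\partial\Omega)$, which is precisely why $g$ must lie in $L^\infty(0,T;W^{-\frac{q}{q+1},\frac{q+1}{q}}(\partial\Omega))$. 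To capture the $H^2(0,T;L^2(\Omega))$ component in the definition of $\cW$, I would differentiate the equation once in time and test with $\ddot{u}$, again using monotonicity of the damping and pairing the boundary term with $\dot{g}$; the nonlinear source $2k(\dot{v})^2$ and its time-derivative $4k\dot{v}\ddot{v}$ are controlled via $W^{1,q+1}\hookrightarrow L^\infty$ applied to $\dot{v}$ together with the $\bar m$-bound on $\ddot{v}$.

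For the contraction step, take $v_1, v_2\in \cW$ with $u_i = \mathcal{T}v_i$, subtract the two auxiliary equations and test with $\dot{u}_1 - \dot{u}_2$. Inequality \eqref{ineq4} supplies coercivity from the difference of the $q$-Laplacians, while the right-hand-side differences $2k(\dot{v}_1^2 - \dot{v}_2^2)$ and $-2k(v_1-v_2)\ddot{u}_1$ are linear in $v_1-v_2$ with coefficients bounded in terms of $\bar m$ and $\bar M$. Combining these with Gr\"onwall's inequality, the Lipschitz constant is driven strictly below one once $T$, the initial and boundary data, and the radii $\bar m, \bar M$ are chosen small enough, yielding the unique fixed point $u\in\cW$ of $\mathcal{T}$.

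The genuinely new difficulty relative to the Dirichlet case is the treatment of the boundary forcing $g$. Because the trace of $\dot{u}$ lives in $W^{1-\frac{1}{q+1},q+1}(\partial \Omega)$, the natural pairing forces $g$ and $\dot g$ into the dual space $W^{-\frac{q}{q+1},\frac{q+1}{q}}(\partial\Omega)$, and one has to track the constants in the trace inequality carefully so that the boundary contribution does not destroy the smallness balance set by the coercive interior terms. A related subtlety is that non-degeneracy of $1-2ku$ can no longer be enforced by vanishing boundary values of $u$; it must be guaranteed globally by requiring smallness of $u_0$, $u_1$, $g$, and $\bar M$, which is precisely what the hypothesis of the proposition is quantifying.
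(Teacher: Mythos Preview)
The paper does not actually prove this proposition; it is merely recalled from \cite{VN} (see the line ``cf.\ Theorem 2.5, \cite{VN}'' preceding the statement), so there is no in-paper proof to compare against. Your outline is the standard fixed-point scheme used in \cite{BKR13} and \cite{VN} for this class of problems, and the adaptations you identify for the Neumann case---controlling the boundary pairing $\langle g,\dot u\rangle_{\partial\Omega}$ and $\langle \dot g,\ddot u\rangle_{\partial\Omega}$ via the trace $W^{1,q+1}(\Omega)\to W^{1-\frac{1}{q+1},q+1}(\partial\Omega)$ and its dual, and enforcing non-degeneracy of $1-2ku$ through smallness of the data rather than through zero boundary values---are exactly the points where the argument departs from the Dirichlet proof and are handled correctly.
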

\noindent By inspecting the proof of Proposition \ref{thm:interior_reg}, we immediately obtain higher interior regularity result for the present model, since the cut-off function used in the proof vanishes near the boundary:
\begin{corollary} \label{interior_ModWest_Neumann}  Let the assumptions  \eqref{coeff_ModWest_Dirichlet} hold true, $u_0 \in H^2(\Omega) \cap W^{1,q+1}(\Omega)$, $u_1 \in W^{1,q+1}(\Omega)$, and let $u$ be the weak solution of \eqref{ModWest_Neumann}. Then $u \in H^1(0,T;H^2_{loc}(\Omega))$ and $|\nabla \dot{u}|^{\frac{q-1}{2}}\nabla \dot{u} \in L^2(0,T;H^1_{loc}(\Omega))$. Moreover, \eqref{H_reg} holds.
\end{corollary}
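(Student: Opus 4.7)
The strategy is to reduce the Neumann case to the already-proved Dirichlet case by observing that the interior regularity argument is purely local and is completely insensitive to the boundary condition. Concretely, I would pick the same nested open sets $V \subset\subset W \subset\subset \Omega$ and the same smooth cut-off $\zeta$ with $\zeta \equiv 1$ on $V$, $\mathrm{supp}\,\zeta \subset W$, and then choose the test function
\[
\phi := -D_r^{-l}(\zeta^2 D_r^l \dot{u})\,\chi_{[0,t)}, \qquad r\in\{1,\dots,d\},\ \ 0<|l|<\tfrac{1}{2}\mathrm{dist}(W,\partial\Omega),
\]
exactly as in Theorem \ref{thm:interior_reg}. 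Since $\mathrm{supp}\,\zeta \subset\subset \Omega$, this $\phi$ lies in $L^2(0,T;W^{1,q+1}(\Omega))$ and vanishes in a neighborhood of $\partial\Omega$, hence it is an admissible test function in the weak formulation of \eqref{ModWest_Neumann}.

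Because $\phi\big|_{\partial\Omega}=0$, the only place where the Dirichlet and Neumann formulations differ, namely the boundary term $\int_{\partial\Omega} g\,\phi\,dS$, vanishes identically. Therefore, after integration by parts in $x$ against the difference quotient $D_r^{-l}$, we obtain the same identity that launched the estimate \eqref{interior:est1} in the proof of Theorem \ref{thm:interior_reg}; every subsequent manipulation (application of the inequalities \eqref{ineq3}--\eqref{ineq5} to handle the $q$-Laplace term, Hölder and Young to absorb the cut-off gradient terms, use of Lemma \ref{reg_evans} to pass from difference quotients to weak derivatives) is algebraic and requires no information about the trace of $u$ or $\dot u$. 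The regularity of the initial data needed to control the right-hand side is exactly $u_0 \in H^2(\Omega)\cap W^{1,q+1}(\Omega)$ and $u_1 \in W^{1,q+1}(\Omega)$, which is what we assume; note that unlike the Dirichlet case, we no longer need $W_0^{1,q+1}$, but this weakening is harmless because the $H^2$-regularity of $u_0$ only enters through $\|D_r^l\nabla u_0\|_{L^2(\Omega)}\leq C\|u_0\|_{H^2(\Omega)}$.

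Putting this together yields, as in the Dirichlet case, the uniform-in-$l$ bound
\[
\|D_r^l\dot u\|_{L^\infty(0,T;L^2(V))}+\|D_r^l\nabla u\|_{L^\infty(0,T;L^2(V))}+\|D_r^l\nabla\dot u\|_{L^2(0,T;L^2(V))}+\|D_r^l F\|_{L^2(0,T;L^2(V))} \leq C,
\]
whence Lemma \ref{reg_evans}(b) gives $u \in H^1(0,T;H^2_{\mathrm{loc}}(\Omega))$ and $F=|\nabla\dot u|^{(q-1)/2}\nabla\dot u \in L^2(0,T;H^1_{\mathrm{loc}}(\Omega))$. Finally, the Hölder regularity statement \eqref{H_reg} follows verbatim from the same Sobolev embedding chain $H^1_{\mathrm{loc}}\hookrightarrow L^6_{\mathrm{loc}}$ (in $d\le 3$) used after Theorem \ref{thm:interior_reg}, since that argument depends only on the membership $F\in L^2(0,T;H^1_{\mathrm{loc}}(\Omega))$ and not on the boundary condition. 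The only potential obstacle is bookkeeping the admissibility of $\phi$ and checking that the Neumann weak formulation allows this class of test functions, but since the problem's test space in \cite{VN} is $L^2(0,T;W^{1,q+1}(\Omega))$, this is immediate.
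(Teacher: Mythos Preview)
Your proposal is correct and matches the paper's own reasoning: the paper simply remarks that the proof of Theorem~\ref{thm:interior_reg} carries over verbatim because the cut-off function $\zeta$ vanishes near $\partial\Omega$, so the boundary condition plays no role. Your write-up spells out in more detail why the Neumann boundary term $\int_{\partial\Omega} g\,\phi\,dS$ vanishes and why $\phi$ is admissible in the Neumann test space, but the essential idea is identical.
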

\section{Interior regularity for the coupled problem} \label{interior_ModWestcoupled}
Let us now assume that $\Omega \subset \mathbb{R}^d$, $d \in \{1,2,3\}$, is a bounded domain with Lipschitz  boundary $\partial \Omega$, and $\Omega_{+}$ a subdomain, representing the lens, such that $\bar{\Omega}_{+} \subset \Omega$ and $\Omega_{+}$  has Lipschitz boundary $\partial \Omega_+=\Gamma$. 
\begin{minipage}{0.4\textwidth}
\begin{center}
\vspace{0.2cm}
 \def\svgwidth{130pt}
\begingroup%
  \makeatletter%
  \providecommand\color[2][]{%
    \errmessage{(Inkscape) Color is used for the text in Inkscape, but the package 'color.sty' is not loaded}%
    \renewcommand\color[2][]{}%
  }%
  \providecommand\transparent[1]{%
    \errmessage{(Inkscape) Transparency is used (non-zero) for the text in Inkscape, but the package 'transparent.sty' is not loaded}%
    \renewcommand\transparent[1]{}%
  }%
  \providecommand\rotatebox[2]{#2}%
  \ifx\svgwidth\undefined%
    \setlength{\unitlength}{321.05312653bp}%
    \ifx\svgscale\undefined%
      \relax%
    \else%
      \setlength{\unitlength}{\unitlength * \real{\svgscale}}%
    \fi%
  \else%
    \setlength{\unitlength}{\svgwidth}%
  \fi%
  \global\let\svgwidth\undefined%
  \global\let\svgscale\undefined%
  \makeatother%
  \begin{picture}(1,0.88560109)%
    \put(0.15,0.15){\includegraphics[width=0.7\unitlength]{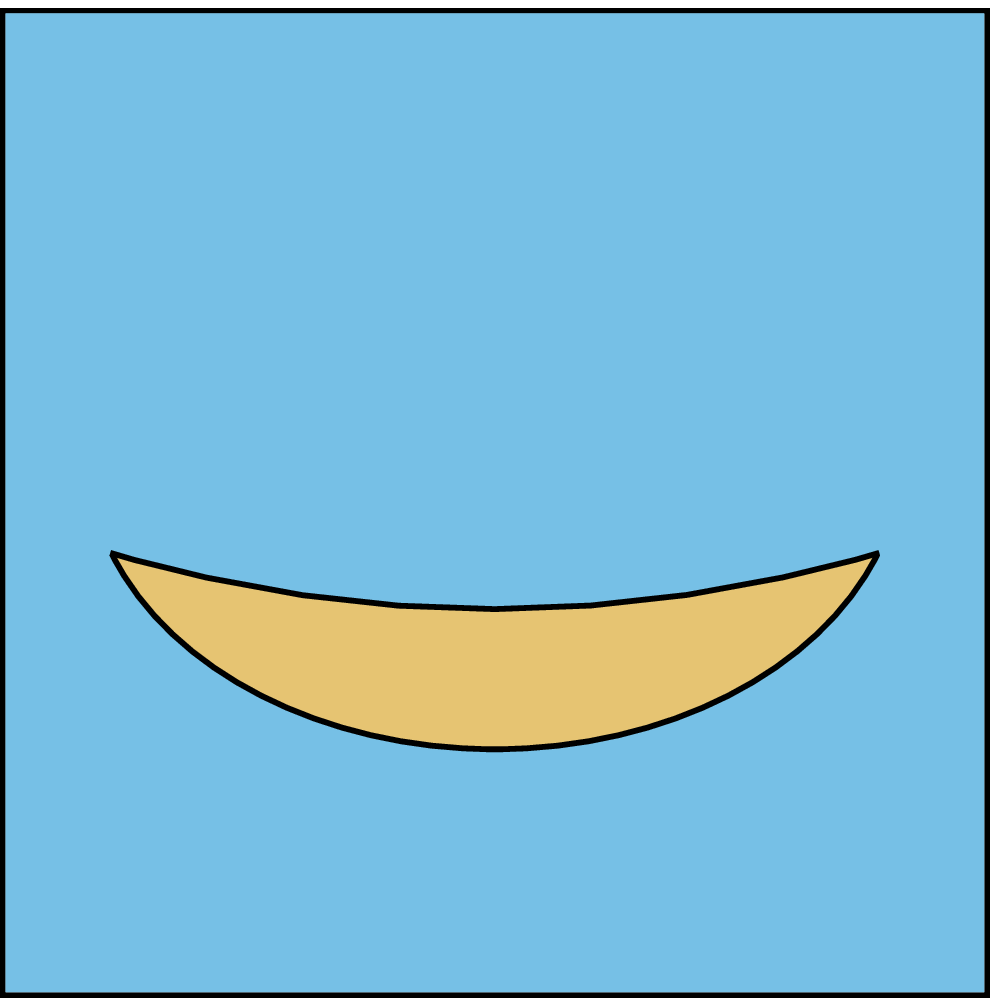}}%
    \put(0.45408068,0.33151276){\color[rgb]{0,0,0}\makebox(0,0)[lb]{$\Omega_+$}}%
    \put(0.6054314,0.64808884){\color[rgb]{0,0,0}\makebox(0,0)[lb]{$\Omega_-$}}%
    \put(0.31140779,0.46382592){\color[rgb]{0,0,0}\makebox(0,0)[lb]{$\Gamma$}}%
    \put(0.05172079,0.49225733){\color[rgb]{0,0,0}\makebox(0,0)[lb]{$\Omega$}}%
  \end{picture}%
\endgroup%

\vspace{-0.7cm}
\\ \scriptsize Lens $\Omega_+$ and fluid $\Omega_-$ regions \\
\end{center}
\end{minipage}
\begin{minipage}{0.59\textwidth}
We denote by $\Omega_{-}=\Omega \setminus \bar{\Omega}_{+}$ the part of the domain representing the fluid region. We then have  $\partial \Omega_-=\Gamma \cup \partial \Omega$. \\
$n_{+}$, $n_{-}$  will stand for the unit outer normals to lens $\Omega_{+}$ and fluid region $\Omega_{-}$. Restrictions of a function $v$ to $\Omega_{+,-}$ will be denoted by $v_{+}$, $v  _{-}$ and $\llbracket v \rrbracket:=v_+-v_-$ will denote the jump over $\Gamma$.\\
Note that the assumption on the regularity of the subdomains will be strengthened to $C^{1,1}$ when showing higher regularity up to the boundary of the subdomains.\\
\end{minipage} \vspace{1mm}\\
The coefficients in \eqref{ModWest_coupled} will only be allowed to jump over the interface $\Gamma$:
\begin{equation} \label{coeff}
\begin{cases}
 b,\varrho, \lambda, \delta, k \in L^{\infty}(\Omega), \\
b_i:=b\vert_{\Omega_i},   \varrho_i:=\varrho\vert_{\Omega_i},  \lambda_i:=\lambda\vert_{\Omega_i} >0, \ \delta_i:=\delta\vert_{\Omega_i} \in (0,1), \ k_i:=k\vert_{\Omega_i} \in \mathbb{R} \quad \text{for} \ i \in \{+,-\}.
\end{cases}
\end{equation}
We assume that $q \geq 1$, $q>d-1$.
The strong formulation of \eqref{ModWest_coupled} reads as follows:
\begin{align} \label{ModWest_coupled_Dirichlet}
\begin{cases}
\frac{1}{\lambda(x)}(1-2k(x)u)\ddot{u}-\div(\frac{1}{\varrho(x)}\nabla u)
-\text{div}\Bigl(b(x)((1-\delta(x))+\delta(x)|\nabla \dot{u}|^{q-1})\nabla \dot{u}\Bigr) \vspace{1.5mm} \\
=\frac{2k(x)}{\lambda(x)}(\dot{u})^2 \quad \text{ in } \Omega_{+} \cup \Omega_{-}, \vspace{1.5mm} \\
 \llbracket u \rrbracket =0 \quad \text{on} \ \Gamma=\partial \Omega_+, \vspace{1.5mm} \\
  \Bigl \llbracket \frac{1}{\varrho} \frac{\partial u}{\partial n_{+}}+b(1-\delta)\frac{\partial \dot{u}}{\partial n_{+}}+b\delta|\nabla \dot{u}|^{q-1}\frac{\partial \dot{u}}{\partial n_{+}}\Bigr \rrbracket =0 \quad \text{on} \ \Gamma=\partial \Omega_+, \vspace{1mm} \\
 u=0 \quad \text{on} \ \partial \Omega, \vspace{1mm}\\
(u,\dot{u})\vert_{t=0}=(u_0,u_1).
\end{cases}
\end{align}
This model was studied (in an equivalent one domain formulation) in \cite{BKR13}. We will utilize the following well-posedness result (cf. Theorem 2.3 and Corollary 4.1, \cite{BKR13}):
\begin{proposition} (Local well-posedness)\label{aa_corollary}
Let $q>d-1$, $q \geq 1$ and the assumptions \eqref{coeff} hold. For any $T>0$ there is a $\kappa_T>0$ such that for all  $u_0, u_1 \in W_0^{1,q+1}(\Omega)$ with
\begin{align*}
|u_1|^2_{L^2(\Omega)}+|\nabla u_0|^2_{L^2(\Omega)}+|\nabla u_1|^2_{L^2(\Omega)}+|\nabla u_0|^2_{L^{q+1}(\Omega)}+|\nabla u_1|^{q+1}_{L^{q+1}(\Omega)} \leq \kappa_T^2
\end{align*}
there exists a unique solution $u\in \cW \subset X$ of \eqref{ModWest_coupled_Dirichlet}, where $X=H^2(0,T;L^2(\Omega))\cap C^{0,1}(0,T;W_0^{1,q+1}(\Omega))$, and
\begin{eqnarray} 
\cW =\{v\in X
&:& \|\ddot{v}\|_{L^2(0,T;L^2(\Omega))}\leq \bar{m}  
 \wedge \|\nabla \dot{v}\|_{L^\infty(0,T;L^2(\Omega))}\leq \bar{m} \\
&& \wedge \|\nabla \dot{v}\|_{L^{q+1}(0,T;L^{q+1}(\Omega))}\leq \bar{M} \wedge  (v,\dot{v})=(u_0,u_1) \},\nonumber 
\end{eqnarray}
with \begin{equation*}
2\overline{k}C_{W_0^{1,q+1},L^\infty}^\Omega(\kappa_T+
T^{\frac{q}{q+1}} \bar{M}) <1, 
\end{equation*}  and $\bar{m}$ sufficiently small.
\end{proposition}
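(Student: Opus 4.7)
The plan is to prove local well-posedness via a Banach fixed-point argument on the set $\cW$, applied to a map that solves a linearized version of \eqref{ModWest_coupled_Dirichlet} in which the degenerate factor $1-2ku$ and the source $2k(\dot{u})^2/\lambda$ are frozen using a given $\bar{u}\in\cW$. For each such $\bar{u}$, I would construct a solution $v$ of the linearized coupled problem
\[
\tfrac{1}{\lambda}(1-2k\bar u)\ddot v-\div(\tfrac{1}{\varrho}\nabla v)-\div\!\bigl(b(1-\delta)\nabla\dot v+b\delta|\nabla\dot v|^{q-1}\nabla\dot v\bigr)=\tfrac{2k}{\lambda}(\dot{\bar u})^2,
\]
with the interface transmission conditions encoded by the spatially varying coefficients in the weak form \eqref{ModWest_coupled}. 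Existence for this linear-in-the-leading-coefficient but quasilinear-in-the-damping problem is obtained by a Galerkin scheme in a basis of $W^{1,q+1}_0(\Omega)$, where the nonlinear damping term is handled by monotonicity via inequality \eqref{ineq4} (to pass to the limit in $|\nabla\dot v|^{q-1}\nabla\dot v$) together with Minty's trick, and the time-dependent leading coefficient is controlled by the bound \eqref{degeneracy} applied to $\bar u$, which under the smallness assumption \eqref{smallnessMbar} keeps $(1-2k\bar u)$ uniformly bounded away from zero.

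Next I would derive the energy estimates that confirm the self-mapping property $\bar u\mapsto v\in\cW$. Testing the Galerkin equation with $\dot v$ yields the first-level estimate, controlling $\|\dot v\|_{L^\infty L^2}$, $\|\nabla v\|_{L^\infty L^2}$, $\|\nabla\dot v\|_{L^2 L^2}$ and $\|\nabla\dot v\|_{L^{q+1} L^{q+1}}^{q+1}$; here the $L^\infty$-bound on $\dot{\bar u}$ via $W^{1,q+1}\hookrightarrow L^\infty$ is what makes the source term $(\dot{\bar u})^2\dot v$ manageable. Testing with $\ddot v$ (after formally differentiating in time, or via a regularization) gives control of $\|\ddot v\|_{L^2 L^2}$ and $\|\nabla\dot v\|_{L^\infty L^2}$, using Young's inequality \eqref{Young} to absorb the $q$-Laplace contribution into the dissipation. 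The smallness of $\kappa_T$ and $\bar m$, together with the smallness condition \eqref{smallnessMbar}, ensures these estimates close so that the constants match $\bar m$ and $\bar M$, giving $v\in\cW$.

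Contractivity follows by considering the difference $v_1-v_2$ corresponding to $\bar u_1,\bar u_2\in\cW$, testing with $\dot v_1-\dot v_2$, and using the monotonicity inequality \eqref{ineq4} to obtain a nonnegative contribution from the difference of the $q$-Laplace terms; the remaining terms are estimated by the first-level norm on $\bar u_1-\bar u_2$ with a prefactor that is small provided $T$, $\bar m$, $\bar M$ are small. This yields a strict contraction in, e.g., the energy norm associated with $H^1(0,T;L^2)\cap L^\infty(0,T;H^1_0)$, hence a unique fixed point $u\in\cW$. Since $u$ satisfies the weak form \eqref{ModWest_coupled} with spatially piecewise-constant coefficients, the transmission conditions in \eqref{ModWest_coupled_Dirichlet} across $\Gamma$ are recovered in the standard distributional sense.

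The main obstacle I anticipate is the simultaneous handling of the quasilinear $q$-Laplace damping and the time-dependent degenerate factor $(1-2k\bar u)$: the $q$-Laplace term obstructs linear semigroup or straightforward parabolic methods, forcing reliance on monotonicity and a carefully chosen Galerkin basis compatible with $W^{1,q+1}_0$, while the degenerate factor must be uniformly controlled along the iteration, which is precisely the reason the smallness constraint \eqref{smallnessMbar} and the structural bound \eqref{degeneracy} are imposed. Everything else — the piecewise-constant coefficient structure, the jumps across $\Gamma$, and the Dirichlet condition on $\partial\Omega$ — enters only through the weak form and does not interact with the nonlinear analysis beyond requiring that all estimates be written in terms of the weighted norms induced by $\lambda,\varrho,b$, all of which are bounded above and below thanks to \eqref{coeff}.
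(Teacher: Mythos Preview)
The paper does not supply its own proof of this proposition; it is quoted from \cite{BKR13} (Theorem~2.3 and Corollary~4.1), so there is no in-paper argument to compare against. Your outline---Banach fixed point on $\cW$ with only the degenerate coefficient $1-2k\bar u$ and the quadratic source frozen, Galerkin approximation plus monotonicity/Minty for the auxiliary quasilinear problem, energy estimates obtained by testing with $\dot v$ and $\ddot v$, and contraction via the monotonicity inequality \eqref{ineq4}---is exactly the architecture used in \cite{BKR13}, and your reading of the roles of \eqref{degeneracy} and the smallness condition on $\bar m,\bar M,\kappa_T$ is correct.
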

\noindent For simplicity of exposition, higher interior and later boundary regularity will be obtained under the assumption that the coefficients in \eqref{ModWest_coupled_Dirichlet} are piecewise costant functions, i.e.
 \begin{equation} \label{coeff_const}
\begin{cases}
b_i:=b\vert_{\Omega_i},\varrho_i:=\varrho\vert_{\Omega_i}, \lambda_i:=\lambda\vert_{\Omega_i}, \delta_i:=\delta\vert_{\Omega_i}, k_i:=k\vert_{\Omega_i} \ \text{are constants}, \\
b_i,   \varrho_i,  \lambda_i >0, \quad \delta_i \in (0,1), \quad k_i \in \mathbb{R} \quad \text{for} \ i \in \{+,-\}.
\end{cases}
\end{equation}
We denote  $\underline{\omega}=\min\{|\omega_+|,|\omega_-|\}$, $\overline{\omega}=\max\{|\omega_+|,|\omega_-|\}$, where $\omega \in \{b, \varrho, \lambda, \delta, k\}$.  
The proof of Theorem \ref{thm:interior_reg} can be carried over in a straightforward manner to the coupled problem to show higher interior regularity within each of the subdomains:
\begin{corollary} \label{interior_reg_coupled}  Assume that $q\geq 1$, $q>d-1$, $u_0\vert_{\Omega_i} \in H^2(\Omega_i)$, $i\in\{+,-\}$, $u_0,u_1 \in W_0^{1,q+1}(\Omega)$ and assumptions \eqref{coeff_const} on the coefficients hold true. Let $u$ be the weak solution of \eqref{ModWest_coupled_Dirichlet}. Then $u_i \in H^1(0,T;H^2_{loc}(\Omega_i))$ and $|\nabla \dot{u}_i|^{\frac{q-1}{2}}\nabla \dot{u}_i \in L^2(0,T;H^1_{loc}(\Omega_i))$, $i \in \{+,-\}$. Moreover, \eqref{H_reg} holds with $u$ replaced by $u_i$ and $\Omega$ by $\Omega_i$, $i \in \{+,-\}$.
\end{corollary}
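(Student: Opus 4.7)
The plan is to emulate the proof of Theorem \ref{thm:interior_reg} separately on each subdomain, exploiting the fact that a cut-off function compactly supported inside $\Omega_i$ sees neither the interface $\Gamma$ nor the outer boundary $\partial\Omega$. Fix $i\in\{+,-\}$, pick open sets $V\subset\subset W\subset\subset \Omega_i$, and introduce a smooth cut-off $\zeta$ with $\zeta\equiv 1$ on $V$, $\zeta\equiv 0$ outside $W$, and $0\le\zeta\le 1$. For $r\in\{1,\dots,d\}$ and sufficiently small $|l|<\tfrac12\mathrm{dist}(W,\partial\Omega_i)$ the function
\[
\phi := -D_r^{-l}(\zeta^2 D_r^l\dot u)\,\one_{[0,t)},
\]
extended by zero outside $W$, lies in $L^2(0,T;W_0^{1,q+1}(\Omega))$ and is admissible in the weak formulation \eqref{ModWest_coupled}. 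Crucially, since $\mathrm{supp}\,\phi\subset W\subset\subset\Omega_i$, the piecewise-constant assumption \eqref{coeff_const} implies that all coefficients reduce on $\mathrm{supp}\,\phi$ to the constants $1/\lambda_i,1/\varrho_i,b_i,\delta_i,k_i$, and the interface transmission terms drop out entirely.

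Substituting $\phi$ into \eqref{ModWest_coupled} then yields an identity identical in structure to \eqref{interior:est1}, now posed on $\Omega_i$ with $c^2$ replaced by $1/\varrho_i$ and the factor in front of $\ddot u$ by $1/\lambda_i$. All subsequent estimates in the proof of Theorem \ref{thm:interior_reg} -- the use of \eqref{ineq4} to produce the $|\zeta D_r^l F_i|^2$ term on the left, the treatment of the $\zeta\nabla\zeta$ cross terms via \eqref{ineq5}, Lemma \ref{reg_evans}(a), and H\"older, Minkowski and Young's inequalities, together with the smallness of $\bar m$ to absorb the $(1-2k_iu)\ddot u$ contribution -- carry over verbatim, with constants depending on $b_i,\varrho_i,\lambda_i,\delta_i,k_i,q$ and $|\nabla\zeta|_{L^\infty(W)}$. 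One obtains an estimate uniform in $l$ of the form
\[
\|D_r^l\dot u\|_{L^\infty(0,T;L^2(V))}^2 + \|D_r^l\nabla u\|_{L^\infty(0,T;L^2(V))}^2 + \|D_r^l\nabla\dot u\|_{L^2(0,T;L^2(V))}^2 + \|D_r^l F_i\|_{L^2(0,T;L^2(V))}^2 \le C,
\]
with $F_i:=|\nabla\dot u_i|^{(q-1)/2}\nabla\dot u_i$ and $C$ depending on the $\cW$-bounds furnished by Proposition \ref{aa_corollary} and on $|u_0|_{H^2(\Omega_i)}$. Applying Lemma \ref{reg_evans}(b) coordinate by coordinate and then exhausting $\Omega_i$ by sets $V$ yields $u_i\in H^1(0,T;H^2_{loc}(\Omega_i))$ and $F_i\in L^2(0,T;H^1_{loc}(\Omega_i))$; the H\"older statement \eqref{H_reg} follows from Sobolev embedding inside each $\Omega_i$ exactly as in the corollary following Theorem \ref{thm:interior_reg}.

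The only point that is not already handled in the proof of Theorem \ref{thm:interior_reg} is admissibility of the test function in the coupled weak form. Compact support of $\zeta$ inside $\Omega_i$ ensures simultaneously that $\phi$ extends by zero across $\Gamma$ and across $\partial\Omega$ without losing $W_0^{1,q+1}$-regularity, that the difference quotients $D_r^{\pm l}$ remain well-defined for $|l|<\tfrac12\mathrm{dist}(W,\partial\Omega_i)$, and that the piecewise-constant coefficients behave on $\mathrm{supp}\,\phi$ as genuine constants. Once this is verified, the piecewise-constant assumption \eqref{coeff_const} decouples the estimate on each side of the interface, and no genuinely new analytical difficulty arises beyond those already resolved in the single-domain case.
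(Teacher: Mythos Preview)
Your proposal is correct and follows exactly the approach the paper intends: the paper does not give a separate proof of this corollary, stating only that ``the proof of Theorem \ref{thm:interior_reg} can be carried over in a straightforward manner to the coupled problem,'' and your write-up simply spells out the (correct) details of that carry-over via cut-offs compactly supported in each $\Omega_i$.
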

\subsection{Neumann problem for the coupled system}
Let us also consider the coupled problem with Neumann boundary conditions on the outer boundary of the fluid subdomain:
\begin{align} \label{coupled_Neumann}
\begin{cases}
\frac{1}{\lambda(x)}(1-2k(x)u)\ddot{u}-\div(\frac{1}{\varrho(x)}\nabla u)
-\text{div}(b(x)((1-\delta(x))+\delta(x)|\nabla \dot{u}|^{q-1})\nabla \dot{u}) \vspace{1.5mm} \\
=\frac{2k(x)}{\lambda(x)}(\dot{u})^2 \quad \text{ in } \Omega_{+} \cup \Omega_{-}, \vspace{1.5mm} \\
 [u]=0 \quad \text{on} \ \Gamma=\partial \Omega_+, \vspace{1.5mm} \\
  \Bigl[\frac{1}{\varrho} \frac{\partial u}{\partial n_{+}}+b(1-\delta)\frac{\partial \dot{u}}{\partial n_{+}}+b\delta|\nabla \dot{u}|^{q-1}\frac{\partial \dot{u}}{\partial n_{+}}\Bigr]=0 \quad \text{on} \ \Gamma=\partial \Omega_+, \vspace{1.5mm} \\
\frac{1}{\varrho} \frac{\partial u}{\partial n_{+}}+b(1-\delta)\frac{\partial \dot{u}}{\partial n_{+}}+b\delta|\nabla \dot{u}|^{q-1}\frac{\partial \dot{u}}{\partial n_{+}}=g \quad \text{on} \ \partial \Omega, \vspace{1.5mm}\\
(u,\dot{u})\vert_{t=0}=(u_0,u_1).
\end{cases}
\end{align}
We will show that the higher regularity result is valid for this model as well. The weak form of the problem is given as follows:
\begin{align*} 
&\int_0^T \int_\Omega \Bigl\{\frac{1}{\lambda}(1-2ku) \ddot{u}\phi + \frac{1}{\varrho} \nabla u \cdot \nabla \phi 
+ b((1-\delta) +\delta |\nabla \dot{u}|^{q-1})\nabla \dot{u} \cdot \nabla \phi - \frac{2k}{\lambda}(\dot{u})^2 \phi \Bigr\} \, dx \, ds  \\
=& \, \int_0^T \int_{\partial \Omega}g \phi \, dx \, ds, 
\end{align*}
for all $\phi\in L^2(0,T;W^{1,q+1}(\Omega))$, with initial conditions $(u_{0},u_{1})$. We recall the following well-posedness result (cf. \cite{VN}):
\begin{proposition} \label{prop:Neumann}
Let assumptions \eqref{coeff} hold. Let $q>d-1$, $q \geq 1$, $g \in L^{\infty }(0,T;W^{-\frac{q}{q+1},\frac{q+1}{q}}(\partial \Omega))$, $\dot{g} \in L^{\frac{q+1}{q}}(0,T;W^{-\frac{q}{q+1},\frac{q+1}{q}}(\partial \Omega))$, and $u_0, u_1 \in W^{1,q+1}(\Omega)$. 
For sufficiently small initial and boundary data and final time $T$, there exists a unique weak solution $u \in \cW \subset X$ of \eqref{coupled_Neumann}, where $X= H^2(0,T;L^2(\Omega)) \cap C^{0,1}(0,T;W^{1,q+1}(\Omega))$ and
\begin{equation} 
\begin{split}
\cW =\{v\in X 
:& \ \|\ddot{v}\|_{L^2(0,T;L^2(\Omega))}\leq \bar{m}  \wedge \|\dot{v}\|_{L^{\infty}(0,T;H^1(\Omega))}\leq \bar{m}\\
& \wedge \| \nabla \dot{v}\|_{L^{q+1}(0,T;L^{q+1}(\Omega))}\leq \bar{M} 
\}.
\end{split}
\end{equation}
\end{proposition}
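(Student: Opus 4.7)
The plan is to establish local well-posedness via a Banach fixed-point argument on the closed, convex, nonempty set $\cW$, mirroring the strategy used in \cite{BKR13} for the Dirichlet coupling and in \cite{VN} for the Neumann problem on a single domain. Given $v \in \cW$, define $u = \mathcal{T}(v)$ as the unique weak solution of the linearized problem obtained by freezing the coefficient $\frac{1}{\lambda}(1-2kv)$ in front of $\ddot{u}$ and treating $\frac{2k}{\lambda}(\dot{v})^2$ as a right-hand side source. Nondegeneracy of the frozen coefficient, i.e.\ that $1-2kv$ stays bounded away from zero uniformly, follows exactly as in \eqref{degeneracy} from the embedding $W^{1,q+1}(\Omega) \hookrightarrow L^\infty(\Omega)$ (valid since $q>d-1$) together with the smallness conditions on $\kappa_T$, $T$ and $\bar{M}$. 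The interface conditions $\llbracket u \rrbracket = 0$ and the transmission condition on the fluxes are automatically encoded in the weak formulation thanks to the integration by parts across $\Gamma$, so no additional treatment of the interface is needed beyond working in $W^{1,q+1}(\Omega)$.

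First I would construct the solution of the linearized problem by a Galerkin approximation, using eigenfunctions of a positive shifted Laplacian with Neumann boundary conditions as basis functions. The essential estimates come from testing the approximate equation with $\dot{u}^N$ and with $\ddot{u}^N$. The test with $\dot{u}^N$ yields, by Young's and Hölder's inequalities together with the trace inequality on $\partial\Omega$ to absorb the boundary datum $g$, control of $\|\dot{u}^N\|_{L^\infty(0,T;L^2(\Omega))}$, $\|\nabla u^N\|_{L^\infty(0,T;L^2(\Omega))}$, and (from the monotone $q$-Laplace term) $\|\nabla \dot{u}^N\|_{L^{q+1}(0,T;L^{q+1}(\Omega))}$. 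Testing with $\ddot{u}^N$ and exploiting the strong linear damping $b(1-\delta)$ provides the bounds on $\|\ddot{u}^N\|_{L^2(0,T;L^2(\Omega))}$ and on $\|\nabla \dot{u}^N\|_{L^\infty(0,T;L^2(\Omega))}$. Passage to the limit in the nonlinear $q$-Laplace term is done via Minty's monotonicity trick based on inequality \eqref{ineq3}. The quadratic source $\frac{2k}{\lambda}(\dot{v})^2$ is controlled using $\|\dot{v}\|_{L^\infty(0,T;H^1(\Omega))} \leq \bar{m}$ and the embedding $H^1(\Omega) \hookrightarrow L^4(\Omega)$. Assembling the estimates and imposing smallness on $\bar{m}$, $\bar{M}$, the initial data, and $T$, one verifies that $u = \mathcal{T}(v) \in \cW$.

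Finally I would show that $\mathcal{T}$ is a contraction in a weaker norm on $\cW$, for instance that induced by $\|{\cdot}\|_{H^1(0,T;L^2(\Omega))} + \|\nabla({\cdot})\|_{L^\infty(0,T;L^2(\Omega))}$. Writing $u_j = \mathcal{T}(v_j)$ for $j=1,2$, the difference $w = u_1 - u_2$ satisfies a linear equation whose right-hand side involves $(k/\lambda)(v_1 - v_2)\ddot{u}_1$ and $(k/\lambda)(\dot{v}_1 + \dot{v}_2)(\dot{v}_1 - \dot{v}_2)$. The main obstacle is the nonlinear $q$-Laplace difference $|\nabla \dot{u}_1|^{q-1}\nabla \dot{u}_1 - |\nabla \dot{u}_2|^{q-1}\nabla \dot{u}_2$ in the equation for $w$; however, by inequality \eqref{ineq4}, when tested against $\nabla \dot{w}$ this term has the correct sign and can be kept on the left, providing additional dissipation instead of an obstruction. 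The remaining differences are Lipschitz in the data and produce a contraction constant proportional to $\bar{m} + \bar{M} + \kappa_T + \|g\|$, which is strictly less than one under the smallness assumptions. Banach's fixed point theorem then yields the unique solution $u \in \cW$.
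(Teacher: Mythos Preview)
The paper does not actually prove this proposition: it is stated as a recalled result with the parenthetical ``(cf.\ \cite{VN})'' and no proof is given in the text. Your outline --- Banach fixed point on $\cW$, linearization by freezing $1-2kv$, Galerkin approximation, energy estimates from testing with $\dot{u}^N$ and $\ddot{u}^N$, Minty's trick for the $q$-Laplace limit, and contraction in a weaker topology using the monotonicity inequality \eqref{ineq4} to keep the nonlinear damping difference on the left --- is precisely the scheme employed in \cite{BKR13} and \cite{VN}, so your approach coincides with the one the paper is citing.
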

\noindent The interior regularity result can again be transferred from Corollary \ref{interior_reg_coupled} to the present model:
\begin{corollary} \label{interior_reg_Neumann}  Let the assumptions of Proposition \ref{prop:Neumann} and assumptions  \eqref{coeff_const} on the coefficients hold true, let 
$u_0\vert_{\Omega_i} \in H^2(\Omega_i)$, $i\in\{+,-\}$, $u_0,u_1 \in W^{1,q+1}(\Omega)$, and let $u$ be the weak solution of \eqref{coupled_Neumann}. Then $u_i \in H^1(0,T;H^2_{loc}(\Omega_i))$ and $|\nabla \dot{u}_i|^{\frac{q-1}{2}}\nabla \dot{u}_i \in L^2(0,T;H^1_{loc}(\Omega_i))$, $i \in \{+,-\}$. Furthermore, \eqref{H_reg} holds with $u$ replaced by $u_i$ and $\Omega$ by $\Omega_i$, $i \in \{+,-\}$.
\end{corollary}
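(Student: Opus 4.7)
The plan is to reduce the statement directly to the proof of Corollary \ref{interior_reg_coupled} (and thus to that of Theorem \ref{thm:interior_reg}), by exploiting the fact that the argument is purely local within each subdomain $\Omega_i$. The essential observation is that interior regularity on $\Omega_i$ is established through a difference-quotient identity localized by a smooth cut-off $\zeta$ with compact support in some $W\subset\subset\Omega_i$. Since $W$ is strictly contained in $\Omega_i$, $\zeta$ vanishes in a neighborhood of both the interface $\Gamma$ and the outer boundary $\partial\Omega$. Consequently, neither the interface transmission condition nor the Neumann datum $g$ enters the estimates, so the only change compared with Corollary \ref{interior_reg_coupled} is the underlying well-posedness input, which is now provided by Proposition \ref{prop:Neumann}.

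Concretely, I would fix $i\in\{+,-\}$, choose $V\subset\subset W\subset\subset\Omega_i$ together with a cut-off $\zeta$ as in the proof of Theorem \ref{thm:interior_reg}, and test the weak Neumann formulation with $\phi:=-D_r^{-l}(\zeta^2 D_r^l\dot u)\one_{[0,t)}$ for $r\in\{1,\ldots,d\}$ and small $|l|>0$. Because $\zeta$ vanishes near $\partial\Omega$, the boundary contribution $\int_0^t\int_{\partial\Omega} g\phi\, dx\, ds$ is identically zero, and because $\zeta$ vanishes near $\Gamma$, integration by parts in the difference-quotient identities produces no jump terms. All integrals thus reduce to integrals over $\Omega_i$ with the piecewise-constant coefficients replaced by the single constants $(b_i,\varrho_i,\lambda_i,\delta_i,k_i)$ supplied by \eqref{coeff_const}.

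From here I would repeat, essentially line by line, the chain of estimates in the proof of Theorem \ref{thm:interior_reg}, using inequalities \eqref{ineq4}, \eqref{ineq5}, Young's inequality \eqref{Young}, Minkowski's inequality and Lemma \ref{reg_evans}(a). This yields, uniformly in small $|l|>0$,
\begin{align*}
&\|D_r^l \dot u_i\|_{L^\infty(0,T;L^2(V))}^2
+\|D_r^l \nabla u_i\|_{L^\infty(0,T;L^2(V))}^2\\
&\quad +\|D_r^l \nabla \dot u_i\|_{L^2(0,T;L^2(V))}^2
+\|D_r^l F_i\|_{L^2(0,T;L^2(V))}^2 \leq C,
\end{align*}
where $F_i=|\nabla\dot u_i|^{(q-1)/2}\nabla\dot u_i$ and $C$ depends only on $|\nabla\zeta|_{L^\infty(W)}$ and on the $\cW$-bounds $\bar m,\bar M$ from Proposition \ref{prop:Neumann}, together with the norms of $u_0|_{\Omega_i}$ in $H^2(\Omega_i)$ and of $u_1$ in $W^{1,q+1}(\Omega)$. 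Applying Lemma \ref{reg_evans}(b) on $V$ then upgrades $\nabla u_i$ and $\nabla\dot u_i$ to $H^1(V)$-valued and $F_i$ to an element of $L^2(0,T;H^1(V))$; since $V\subset\subset\Omega_i$ was arbitrary, the claimed local regularities follow. The Hölder statement \eqref{H_reg} is obtained by the same Sobolev-embedding argument as in the corollary following Theorem \ref{thm:interior_reg}, now applied in each $\Omega_i$.

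I do not anticipate a genuine obstacle: the proof is a direct transcription of the Dirichlet/interior argument, and the only place where one must actively check something is the vanishing of the Neumann boundary term in the first testing step, which is immediate from $\mathrm{supp}\,\zeta\subset W\subset\subset\Omega_i$. Everything else is bookkeeping with the subdomain constants $(b_i,\varrho_i,\lambda_i,\delta_i,k_i)$ and with the a priori bounds guaranteed by Proposition \ref{prop:Neumann}.
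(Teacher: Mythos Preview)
Your proposal is correct and follows precisely the approach the paper intends: the paper simply remarks that the interior regularity result ``can again be transferred from Corollary \ref{interior_reg_coupled} to the present model,'' and your argument spells out exactly this transfer, noting (as in Corollary \ref{interior_ModWest_Neumann}) that the cut-off $\zeta$ vanishes near $\partial\Omega$ and $\Gamma$, so the Neumann datum $g$ and the interface contribute nothing to the localized estimates.
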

\section{Boundary regularity for the coupled problem} \label{boundary}
We  will show next that the $H^2$-regularity result can be extended up to the boundary of each of the subdomains under the assumption that the gradient of $u$ is essentially bounded in time and space on the whole domain. For this property to hold, we will need to smoothen out the subdomains, i.e. assume that they are $C^{1,1}$ regular. The proof will expand on the approach taken in Lemma 3.6, \cite{Ammari}.
\begin{theorem} \label{thm:boundary_reg_coupled_Dirichlet}(Boundary $H^2$-regularity)
Let the assumptions of Corollary \ref{interior_reg_coupled} hold and let $\partial \Omega$ and $\Gamma=\partial \Omega_+$ be $C^{1,1}$ regular. If $u \in W^{1,\infty}(0,T;W^{1,\infty}(\Omega))$ and $\|\nabla \dot{u}\|_{L^{\infty}(\Omega)(0,T;L^\infty(\Omega))}$ is sufficiently small, then $u_{+,-} \in H^{1}(0,T;H^{2}(\Omega_{+,-}))$.
\end{theorem}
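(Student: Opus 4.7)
\emph{Strategy.} The plan is a localize–flatten–tangential-difference-quotient argument along the lines of Lemma~3.6 of \cite{Ammari}, coupled with a PDE-based recovery of the remaining normal second derivative. First, cover $\overline{\Omega}$ by finitely many patches: interior balls on which Corollary~\ref{interior_reg_coupled} already gives $H^2$-control, patches near $\partial\Omega$ whose $C^{1,1}$ chart flattens $\partial\Omega$ to a piece of $\{x_d=0\}$, and patches straddling $\Gamma$ whose chart flattens $\Gamma$ so that $\Omega_\pm$ correspond to $\{\pm x_d>0\}$. A subordinate partition of unity $\{\zeta_j\}$ then reduces the claim to an $H^2$-estimate for $\zeta_j u_i$ in each chart.

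\emph{Tangential estimates.} For a tangential index $r\in\{1,\dots,d-1\}$ and a cutoff $\zeta$ supported in the chart, I would test the weak form \eqref{ModWest_coupled} with $\phi=-D_r^{-l}(\zeta^2 D_r^l\dot u)\one_{[0,t)}$. This is admissible on a Dirichlet patch because $\phi$ vanishes on $\partial\Omega$, and it is admissible on an interface patch because a tangential shift $le_r$ maps $\Omega_+$ to $\Omega_+$ and $\Omega_-$ to $\Omega_-$ within the chart, so the piecewise-constant coefficients $b,\lambda,\varrho,k,\delta$ commute with $D_r^l$ and the transmission conditions are preserved. The computations in the proof of Theorem~\ref{thm:interior_reg} then go through essentially line by line, yielding, uniformly in $|l|$ small,
\begin{align*}
&\|\zeta D_r^l\dot u\|_{L^\infty(0,T;L^2(\Omega))}^2+\sum_{i=\pm}\|\zeta D_r^l\nabla u_i\|_{L^\infty(0,T;L^2(\Omega_i))}^2\\
&\qquad +\|\zeta D_r^l\nabla\dot u\|_{L^2(0,T;L^2(\Omega))}^2+\|\zeta D_r^l F\|_{L^2(0,T;L^2(\Omega))}^2\le C,
\end{align*}
with $F=|\nabla\dot u|^{(q-1)/2}\nabla\dot u$. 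Sending $l\to0$ via Lemma~\ref{reg_evans} controls every mixed and tangential second spatial derivative of $u_i$ and $\dot u_i$ in the relevant spaces.

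\emph{Recovering the normal derivative.} What still has to be produced is $\partial_d^2 u_i$ and $\partial_d^2\dot u_i$ in the flattened chart. Reading the strong form \eqref{ModWest_coupled_Dirichlet} as a pointwise elliptic equation for $\dot u_i$ inside each $\Omega_i$ and expanding the $q$-Laplace divergence, the coefficient multiplying $\partial_d^2\dot u_i$ is
\[
b_i(1-\delta_i)+b_i\delta_i|\nabla\dot u_i|^{q-1}+b_i\delta_i(q-1)|\nabla\dot u_i|^{q-3}(\partial_d\dot u_i)^2,
\]
which under the smallness of $\|\nabla\dot u\|_{L^\infty(0,T;L^\infty(\Omega))}$ lies in an arbitrarily small neighbourhood of $b_i(1-\delta_i)>0$, hence is bounded below. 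Algebraically solving for $\partial_d^2\dot u_i$ expresses it as a linear combination of $\ddot u$, $\Delta u$, the tangential second derivatives of $\dot u$ already controlled above, and lower-order terms that are dominated using the $W^{1,\infty}$ hypothesis on $u$. Integrating in time then yields $\partial_d^2 u_i\in L^\infty(0,T;L^2)$ and $\partial_d^2\dot u_i\in L^2(0,T;L^2)$ in each chart, and summing over the partition of unity completes the proof. The transmission conditions on $\Gamma$ are automatically respected, since they are already encoded in the weak formulation and are preserved by the tangential-only shift.

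\emph{Main obstacle.} The delicate point is precisely the last step. The cross-derivative contribution $b_i\delta_i(q-1)|\nabla\dot u_i|^{q-3}\sum_{r<d}\partial_r\dot u_i\,\partial_r\partial_d\dot u_i$ and the pure-normal correction $b_i\delta_i(q-1)|\nabla\dot u_i|^{q-3}(\partial_d\dot u_i)^2\partial_d^2\dot u_i$ that come from $\text{div}(|\nabla\dot u|^{q-1}\nabla\dot u)$ are genuinely nonlinear in $\nabla\dot u$ and, without smallness, could either destroy the uniform ellipticity in the normal direction or prevent the cross-terms from being absorbed. This is where smallness of $\|\nabla\dot u\|_{L^\infty(0,T;L^\infty(\Omega))}$ is indispensable: it turns these nonlinear corrections into arbitrarily small perturbations of the linear operator $-b_i(1-\delta_i)\Delta\dot u$, so they can be moved to the right-hand side and absorbed. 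A secondary technical point is the handling of the $q$-Laplace divergence term under the coordinate straightening near $\Gamma$: one must check that the $C^{1,1}$ change of variables produces only lower-order perturbations compatible with Lemma~\ref{reg_evans}, which is standard but needs to be verified carefully because of the non-smooth dependence of $|\nabla\dot u|^{q-1}\nabla\dot u$ on its argument.
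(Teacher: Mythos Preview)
Your overall architecture---localize, flatten the boundary/interface by a $C^{1,1}$ chart, run tangential difference quotients with $\phi=-D_r^{-l}(\zeta^2 D_r^l\dot u)$, and then recover the missing normal--normal derivative from the equation---is exactly the route the paper takes, and your tangential step is essentially identical to theirs.

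The gap is in the normal-recovery step. You propose to ``read the strong form as a pointwise elliptic equation for $\dot u_i$ and expand the $q$-Laplace divergence'' so as to isolate the coefficient of $\partial_d^2\dot u_i$. But expanding $\mathrm{div}(|\nabla\dot u|^{q-1}\nabla\dot u)$ via the chain rule already presupposes that $\dot u$ has second spatial derivatives, which is precisely what you are trying to prove; so this step is circular. A normal difference quotient is not available either, since shifting in $e_d$ leaves the half-space chart (on a $\Gamma$-patch it would cross the interface). Moreover, when you write that the right-hand side contains $\Delta u$, you are implicitly carrying the unknown $\partial_d^2 u$; the subsequent ``integrate in time'' is therefore not a one-line computation but a genuine implicit relation.

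The paper circumvents this as follows. In the flattened variables it tests the weak form with $\phi\in C_0^\infty(G^+)$ and integrates by parts only in the tangential variables (which are now legitimate thanks to the tangential estimates), obtaining that the flux
\[
z=\hat\sigma_{dd}\,D_d w+\hat\xi_{dd}\,D_d\dot w+|J_\Phi^T\nabla\dot w|^{q-1}\hat\eta_{dd}\,D_d\dot w
\]
has a weak $y_d$-derivative in $L^2$, hence $z\in L^2(0,T;H^1(G^+))\cap L^\infty(0,T;L^\infty(G^+))$. No second derivatives of $\dot w$ are used here. The relation $z=\hat\sigma_{dd}D_d w+\tilde\xi_{dd}(t,D_d\dot w)D_d\dot w$ is then viewed, pointwise in space, as a (nonlinear) ODE in time for $D_d w$; solving it by variation of constants gives an expression for $D_d\dot w$ that is a fixed-point equation because $\tilde\xi_{dd}$ depends on $D_d\dot w$. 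Smallness of $\|\nabla\dot u\|_{L^\infty(0,T;L^\infty)}$ makes $\mathcal R'(D_d\dot w)$ small and the associated operator a contraction on $L^2(0,T;H^1)\cap L^\infty(0,T;L^\infty)$, so the unique fixed point---which coincides with $D_d\dot w$ in $L^\infty$---actually lies in $L^2(0,T;H^1(G^+))$. This is how $D_{dd}\dot w\in L^2(0,T;L^2)$ is obtained without ever differentiating the $q$-Laplace nonlinearity. Your sketch should be amended to go through $z$ and this ODE/fixed-point argument rather than through a formal expansion of the divergence.
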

\begin{proof}
We will only show that $u_+ \in H^{1}(0,T;H^{2}(\Omega_+))$, since $u_- \in H^{1}(0,T;H^{2}(\Omega_-))$ follows analogously. \\

\noindent \textit{Step 1: Straightening the boundary.} We begin by straightening the boundary through the change of coordinates near a boundary point (cf. Theorem 4, Section 6.3.2, \cite{evans}). Choose any point $x_0 \in \partial \Omega_+$. There exists a ball $B=B_r(x_0)$ for some $r>0$ and a $C^{1,1}$-diffeomorphism $\Psi:B \rightarrow \Psi(B) \subset \mathbb{R}^d$ such that $\text{det}|\nabla \Psi|=1$, $U^{\prime}=\Psi(B)$ is an open set, $\Psi(B \cap \Omega_+) \subset \mathbb{R}^d_+$ and $\Psi(B\cap \Gamma) \subset \partial \mathbb{R}^d_+$, where $\mathbb{R}^d_+$ is the half-space in the new coordinates.
\vspace{2.5mm} \\
\begin{center}
   \def\svgwidth{207pt}
\begingroup%
  \makeatletter%
  \providecommand\color[2][]{%
    \errmessage{(Inkscape) Color is used for the text in Inkscape, but the package 'color.sty' is not loaded}%
    \renewcommand\color[2][]{}%
  }%
  \providecommand\transparent[1]{%
    \errmessage{(Inkscape) Transparency is used (non-zero) for the text in Inkscape, but the package 'transparent.sty' is not loaded}%
    \renewcommand\transparent[1]{}%
  }%
  \providecommand\rotatebox[2]{#2}%
  \ifx\svgwidth\undefined%
    \setlength{\unitlength}{500.0000051bp}%
    \ifx\svgscale\undefined%
      \relax%
    \else%
      \setlength{\unitlength}{\unitlength * \real{\svgscale}}%
    \fi%
  \else%
    \setlength{\unitlength}{\svgwidth}%
  \fi%
  \global\let\svgwidth\undefined%
  \global\let\svgscale\undefined%
  \makeatother%
  \begin{picture}(1,0.35411162)%
    \put(0,0){\includegraphics[width=\unitlength]{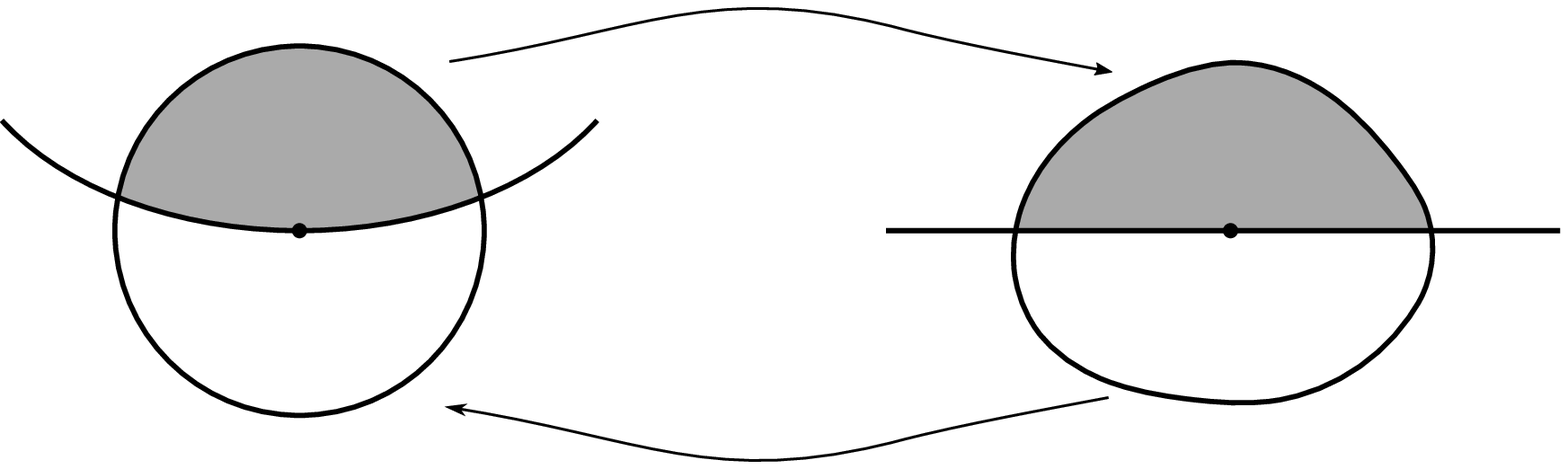}}%
    \put(0.17137779,0.1){\color[rgb]{0,0,0}\makebox(0,0)[lb]{\smash{$x_0$}}}%
    \put(0.7701333,0.1){\color[rgb]{0,0,0}\makebox(0,0)[lb]{\smash{$y_0$}}}%
    \put(0.03906251,-0.015){\color[rgb]{0,0,0}\makebox(0,0)[lb]{\smash{x-coordinates}}}%
    \put(0.67881802,-0.015){\color[rgb]{0,0,0}\makebox(0,0)[lb]{\smash{y-coordinates}}}%
    \put(0.47368677,0.32068974){\color[rgb]{0,0,0}\makebox(0,0)[lb]{\smash{$\Phi$}}}%
    \put(0.46980093,0.01){\color[rgb]{0,0,0}\makebox(0,0)[lb]{\smash{$\Psi$}}}%
  \end{picture}%
\endgroup%
 
 \\[1ex] {\scriptsize straightening out the boundary}
\end{center}
We change the variables and write
\begin{align*}
&y=\Psi(x), \ x \in B, \\
&x=\Phi(y), \ y \in U^{\prime}.
\end{align*} 
Then we have $\Psi(B \cap \Omega_+)=\{y \in U^{\prime}: y_n>0\}$. We denote
\begin{align*}
B^+=B_{\frac{r}{2}}\cap \Omega_+, \ G=\Psi(B_{\frac{r}{2}}(x_0)), \ G^+=\Psi(B^+).
\end{align*}
Then $G \subset \subset U^{\prime}$ and $G^{+} \subset G$. We define
\begin{align*}
w(y,t):= u(\Phi(y),t), \ (y,t) \in U^{\prime} \times [0,T].
\end{align*}
It immediately follows that $w(t):=w(\cdot,t) \in W^{1,q+1}(U^\prime)$. We now transform the original equation on $B \times [0,T]$ into an equation on $U^{\prime} \times [0,T]$:
\begin{equation} \label{eq:boundary_reg}
\begin{aligned}
& \int_{U^{\prime}}\Bigl\{\frac{1}{\hat{\lambda}}(1-2\hat{k}w(t))\ddot{w}(t)\phi+\displaystyle \sum_{i,j=1}^d\Bigl(\hat{\sigma}_{ij}D_iw(t)D_j\phi+\hat{\xi}_{ij}D_i \dot{w}_i(t) D_j \phi \\
&+|J_\Phi^T\nabla \dot{w}(t)|^{q-1}\hat{\eta}_{ij}D_i\dot{w}(t)D_j  \phi\Bigr)-\frac{2\hat{k}}{\hat{\lambda}}(\dot{w}(t))^2\phi\Bigr\} \, dy=0,
\end{aligned}
\end{equation}
for a.e. $t \in [0,T]$, and all $\phi \in W^{1,q+1}_0(U^{\prime})$, where $D_iw=\frac{\partial{w}}{\partial y_i}$, and
\begin{align} \label{def_coeff}
& \hat{\lambda}(y)=\lambda(\Phi(y)), \ \hat{k}(y)=k(\Phi(y)), \nonumber\\
& \hat{\sigma}_{ij}=\displaystyle \sum_{r=1}^{d} \frac{1}{\varrho(\Phi(y))}\frac{\partial \Phi_i}{\partial x_r}(\Phi(y))\frac{\partial \Phi_j}{\partial x_r}(\Phi(y)),\nonumber\\
&\hat{\xi}_{ij}(y)=\displaystyle \sum_{r=1}^{d}b(\Phi(y))(1-\delta(\Phi(y)))\frac{\partial \Phi_i}{\partial x_r}(\Phi(y))\frac{\partial \Phi_j}{\partial x_r}(\Phi(y)),\\
&\hat{\eta}_{ij}(y)=\displaystyle \sum_{r=1}^{d}b(\Phi(y))\delta(\Phi(y))\frac{\partial \Phi_i}{\partial x_r}(\Phi(y))\frac{\partial \Phi_j}{\partial x_r}(\Phi(y)). \nonumber
\end{align}
Note that $D_r\hat{\sigma}_{ij}, D_r\hat{\xi}_{ij}, D_r\hat{\eta}_{ij} \in L^\infty(U^{\prime})$ for $r\in\{1,\ldots, d-1\}$ since $\Psi$ and $\Phi$ are $C^{1,1}$ mappings and $C^{1,1}=W^{2,\infty}$ (cf. Chapter 2, Section 2.6.4, \cite{DZ}). It can be shown (cf. Section 6.3.2, \cite{evans}) that
\begin{equation} \label{coercivity}
\begin{aligned}
&\displaystyle \sum_{i,j=1}^d \hat{\sigma}_{ij} \phi_{i} \phi_j \geq K_1|\phi|^2, \ \displaystyle \sum_{i,j=1}^d \hat{\xi}_{ij} \phi_{i} \phi_j \geq K_1|\phi|^2, \\
&\displaystyle \sum_{i,j=1}^d \hat{\eta}_{ij} \phi_{i} \phi_j \geq K_1|\phi|^2, \ \ \forall (y,\phi) \in U^{\prime} \times \mathbb{R}^d.
\end{aligned}
\end{equation}
\noindent Next, we choose a domain $W^{\prime}$ such that $G \subset \subset W^{\prime} \subset \subset U^{\prime}$ and select a cut-off function such that 
\begin{align*}
\begin{cases}
\zeta=1 \ \text{on} \ G, \ \ \zeta=0 \ \text{on} \ \mathbb{R}^d \setminus W^{\prime},\\
0 \leq \zeta \leq 1.
\end{cases}
\end{align*}
Let $|l|>0$ be small and choose $r \in \{1,\ldots, d-1\}$. Note that since we consider directions parallel to the interface now, we have $D_r^l \hat{\lambda}=0$, $D_r^l \hat{k}=0$ and that there exists a constant $K_2>0$ such that 
\begin{align} \label{est_coeff} 
|D_r^l \hat{\sigma}_{ij}(y)|<K_2,  \ |D_r^l \hat{\xi}_{ij}(y)|<K_2, \
|D_r^l \hat{\eta}_{ij}(y)|<K_2
\end{align} 
for a.e. $y \in W^{\prime}$, $1 \leq i,j \leq d$ and sufficiently small $|l|$.\\

\noindent \textit{Step 2: Existence of second order derivatives $ D_j D_i w \in H^1(0,T;L^2(G^+))$, $j \neq d$.} We then use $\phi=-D_r^{-l}(\zeta^2D_r^l\dot{w}(t))$ as a test function in \eqref{eq:boundary_reg}, which, after integration with respect to time, results in 
\begin{align*}
&\frac{1}{2}\Bigl[\int_{W^{\prime}}\frac{1}{\hat{\lambda^l}}(1-2\hat{k}^lw^l)(\zeta D_r^l \dot{w})^2\, dy\Bigr]_0^t+\frac{1}{2}\Bigl[\int_{W^{\prime}}\displaystyle \sum_{i,j=1}^d \hat{\sigma}_{ij}^l \zeta^2 D_r^l D_i w D_r^l D_j w\, dy\Bigr]_0^t\\
&+\int_0^t \int_{W^{\prime}}\displaystyle \sum_{i,j=1}^d \hat{\xi}_{ij}^l \zeta^2 D_r^l D_i \dot{w} D_r^l D_j \dot{w}\, dy \, ds \\
=& \,\int_0^t \int_{W^{\prime}}\frac{2\hat{k}^l}{\hat{\lambda}^l} D_r^l w \ddot{w} \zeta^2 D_r^l \dot{w} \, dy \, ds+\int_0^t \int_{W^{\prime}}\frac{\hat{k}^l}{\hat{\lambda}^l}(\dot{w}^l+2\dot{w})(\zeta D_r^l \dot{w})^2 \, dy \, ds \\
&-\int_0^t \int_{W^{\prime}}\displaystyle \sum_{i,j=1}^d D_r^l (\hat{\sigma}_{ij})D_i w \,( \zeta^2D_r^lD_j\dot{w}+2\zeta D_j \zeta D_r^l \dot{w})\, dy \, ds\\
&-2\int_0^t \int_{W^{\prime}}\displaystyle \sum_{i,j=1}^d\hat{\sigma}^l_{ij} D_r^l D_i w \, \zeta D_j\zeta D_r^l \dot{w}\, dy \, ds
\\
&-\int_0^t \int_{W^{\prime}}\displaystyle \sum_{i,j=1}^d D_r^l (\hat{\xi}_{ij})D_i \dot{w} \,( \zeta^2D_r^lD_j\dot{w}+2\zeta D_j \zeta D_r^l \dot{w})\, dy \, ds\\
&- 2\int_0^t \int_{W^{\prime}}\displaystyle \sum_{i,j=1}^d \hat{\xi}_{ij} D_i \dot{w} \, \zeta D_j \zeta D_r^l \dot{w}\, dy \, ds\\
&-\int_0^t \int_{W^{\prime}}\displaystyle \sum_{i,j=1}^d D_r^l(|J_\Phi^T\nabla \dot{w}|^{q-1}\hat{\eta}_{ij}D_i\dot{w}) (\zeta^2 D_r^l D_j \dot{w}+2\zeta D_j \zeta D_r^l \dot{w})\, dy \, ds.
\end{align*}
We can estimate the last term on the right hand side as follows
\begin{align*}
&-\int_0^t \int_{W^{\prime}}\displaystyle \sum_{i,j=1}^d D_r^l(|J_\Phi^T\nabla \dot{w}|^{q-1}\hat{\eta}_{ij}D_i\dot{w}) (\zeta^2 D_r^l D_j \dot{w}+2\zeta (D_j \zeta) D_r^l \dot{w})\, dy \, ds \\
=& \, -\int_0^t \int_{W^{\prime}}\displaystyle \sum_{i,j=1}^d |(J_\Phi^T\nabla \dot{w})^l|^{q-1}\hat{\eta}^l_{ij} D_r^lD_i\dot{w} (\zeta^2 D_r^l D_j \dot{w}+2\zeta (D_j \zeta) D_r^l \dot{w})\, dy \, ds \\
&-\int_0^t \int_{W^{\prime}}\displaystyle \sum_{i,j=1}^dD_r^l(|J_\Phi^T\nabla \dot{w}|^{q-1}\hat{\eta}_{ij}) D_i\dot{w} (\zeta^2 D_r^l D_j \dot{w}+2\zeta( D_j \zeta) D_r^l \dot{w})\, dy \, ds \\
\leq& \,  C\|\nabla \dot{w}\|^{q-1}_{L^\infty(0,T;L^\infty(W^\prime))}(\displaystyle \sum_{i=1}^d  \|\zeta D_r^lD_i\dot{w}\|^2_{L^2(0,T;L^2(W^\prime))}+\|\nabla \zeta\|_{L^\infty(0,T;L^\infty(W^\prime))}\|\nabla \dot{w}\|^2_{L^2(0,T;L^2(W^\prime))} )
\end{align*}
with $C$ independent of $\nabla\zeta$.
\noindent 
Due to \eqref{est_coeff} the rest of the terms on the right hand side can be estimated analogously to the estimates in the proof of Theorem \ref{thm:interior_reg}, which for sufficiently small $\|\nabla \dot{w}\|_{L^\infty(0,T;L^\infty(W^\prime))}$ leads to
\begin{align*}
&\|\zeta D_r^l \dot{w}\|^2_{L^\infty(0,T;L^2(W^\prime))}+\displaystyle \sum_{i=1}^d \|\zeta D_r^l D_i w \|^2_{L^\infty(0,T;L^2(W^\prime))}
+\displaystyle \sum_{i=1}^d  \|\zeta D_r^l D_i \dot{w}\|^2_{L^2(0,T;L^2(W^\prime))} \\
\leq& \,C((1+\|\dot{w}\|_{L^\infty(0,T;L^\infty(W^\prime))})\|\nabla w\|^2_{L^\infty(0,T;L^2(W^\prime))} \\
&+(1+\|\nabla \dot{w}\|^{q-1}_{L^\infty(0,T;L^\infty(W^\prime))})\|\nabla \dot{w}\|^2_{L^2(0,T;L^2(W^\prime))}
+|\nabla \dot{w}(0)|^2_{L^2(W^\prime)}+|w(0)|^2_{H^2(W^\prime)}).
\end{align*}
Recalling the definition of $\zeta$ and employing Lemma \ref{reg_evans} yields $D_j D_i w \in H^1(0,T;L^2(G^+))$ for $1\leq i \leq d$, $1\leq j \leq d-1$. \\

\noindent \textit{Step 3: Existence of second order derivative $ D_d D_d w \in H^1(0,T;L^2(G^+))$.} It remains to show that $D_{dd}w:=D_d D_d w \in H^1(0,T;L^2(G^+))$. From \eqref{eq:boundary_reg}, after integration by parts, we obtain 
\begin{align} \label{boundary_eq}
 &\int_{G^+} \{\hat{\sigma}_{dd}D_dw(t)+\hat{\xi}_{dd}D_d\dot{w}(t)+|J_\Phi^T\nabla \dot{w}(t)|^{q-1}\hat{\eta}_{dd}D_d \dot{w}(t)\} D_d \phi \, dy \nonumber\\
 =&\,\int_{G^+}\Bigl\{-\frac{1}{\hat{\lambda}}(1-2\hat{k}w(t))\ddot{w}(t)+\frac{2\hat{k}}{\hat{\lambda}}(\dot{w}(t))^2  \\
 &+\displaystyle \sum_{j=1}^{d-1}\displaystyle \sum_{i=1}^d \Bigl(D_j(\hat{\sigma}_{ij}D_iw(t))+D_j(\hat{\xi}_{ij}D_i \dot{w}(t))
 +D_j(|J_\Phi^T\nabla \dot{w}(t)|^{q-1}\hat{\eta}_{ij}D_i \dot{w}(t))\Bigr)\Bigr\}\phi \, dy,\\
&=: \,\int_{G^+}\hat{f}(w)(t)\phi \, dy,
\nonumber
\end{align}
for $\phi \in C_0^{\infty}(G^+)$, a.e. in $[0,T]$. Since the right hand side of the equation is well-defined, we conclude that for a.e. $t \in [0,T]$ the weak derivative of $\hat{\sigma}_{dd}D_dw(t)+\hat{\xi}_{dd}D_d\dot{w}(t)+|J_\Phi^T\nabla \dot{w}|^{q-1}\hat{\eta}_{dd}D_d \dot{w}(t)$ with respect to $y_d$ exists on $G^+$. Furthermore, for a.e. $t \in [0,T]$ the weak derivative satisfies
\begin{equation} \label{weak_derivative}
\begin{aligned}
&-D_d(\hat{\sigma}_{dd}D_dw(t)+\hat{\xi}_{dd}D_d\dot{w}(t)+|J_\Phi^T\nabla \dot{w}|^{q-1}\hat{\eta}_{dd}D_d \dot{w}(t))
=\hat{f}(w)(t)
 \end{aligned}
\end{equation}
on $G^+$. From what we have shown, it follows that $\hat{f}(w) \in L^2(0,T;L^2(G^+))$. We set 
$$z(t):=\hat{\sigma}_{dd}D_dw(t)+\hat{\xi}_{dd}D_d\dot{w}(t)+|J_\Phi^T\nabla \dot{w}|^{q-1}\hat{\eta}_{dd}D_d \dot{w}(t),$$ and 
$$\tilde{\xi}_{dd}(t,D_d\dot{w}(t)):= \hat{\xi}_{dd}+|J_\Phi^T\nabla \dot{w}(t)|^{q-1}\hat{\eta}_{dd},$$ (suppressing in the notation dependence on $D_1\dot{w}(t),\ldots,D_{d-1}\dot{w}(t)$ which we already know to be smooth anyway) so that relation \eqref{weak_derivative} reads
$$
-D_d z(t)= \hat{f}(w)(t),$$
where
\begin{equation}\label{ODE}
z(t)=\hat{\sigma}_{dd}D_dw(t)+\tilde{\xi}_{dd}(t,D_d\dot{w}(t))D_d\dot{w}(t).
\end{equation}
Since $\hat{f}(w)\in L^2(0,T;L^2(G^+))$, and using the fact that $D_j D_i w \in H^1(0,T;L^2(G^+))$ for $1\leq i \leq d$, $1\leq j \leq d-1$, we have $z\in L^2(0,T;H^1(G^+))$. On the other hand, due to assuming that $u \in W^{1,\infty}(0,T;W^{1,\infty}(\Omega))$, we know also that $z \in L^\infty(0,T;L^\infty(G^+))$. Therefore we conclude that $$z \in L^2(0,T;H^1(G^+)) \cap  L^\infty(0,T;L^\infty(G^+)).$$
\noindent Since \eqref{ODE} represents an ODE (pointwise a.e. in space) for $D_d w(t)$, it can be resolved as follows:
\begin{equation} \label{Dddow}
\begin{aligned}
D_dw(t)=& \, \exp \Bigl(-\int_0^t\frac{\hat{\sigma}_{dd}}{\tilde{\xi}_{dd}(\tau,D_d\dot{w}(\tau))}\, d\tau\Bigr)
\Bigl(\int_0^t\frac{z(\tau)}{\tilde{\xi}_{dd}(\tau,D_d\dot{w}(\tau))}
\exp\Bigl(\int_0^\tau\frac{\hat{\sigma}_{dd}}{\tilde{\xi}_{dd}(\rho,D_d\dot{w}(\rho))}\, d\rho \Bigr)\, d\tau \\
&+ D_dw(0)\Bigr),
\end{aligned}
\end{equation}
and then also $D_d\dot{w}(t)$ can be expressed in terms of $z, \hat{\sigma}_{dd}, \tilde{\xi}_{dd}$:
\begin{equation}\label{Dddotw}
\begin{aligned} 
D_d\dot{w}(t)=& \,\frac{z(t)}{\tilde{\xi}_{dd}(t,D_d\dot{w}(t))}
-\frac{\hat{\sigma}_{dd}}{\tilde{\xi}_{dd}(t,D_d\dot{w}(t))}
\exp\left(-\int_0^t\frac{\hat{\sigma}_{dd}}{\tilde{\xi}_{dd}(\tau,D_d\dot{w}(\tau))}\, d\tau\right)\\
&\left(\int_0^t\frac{z(\tau)}{\tilde{\xi}_{dd}(\tau,D_d\dot{w}(\tau))}
\exp\left(\int_0^\tau\frac{\hat{\sigma}_{dd}}{\tilde{\xi}_{dd}(\rho,D_d\dot{w}(\rho))}\, d\rho\right)\, d\tau
+ D_dw(0)\right)
\end{aligned}
\end{equation}
Since the right hand side depends on $D_d\dot{w}(t)$, this is rather a fixed point equation than an explicit expression for $D_d\dot{w}(t)$.
We therefore consider the fixed point operator $\mathcal{T}:M\to M$, defined by the right hand side of \eqref{Dddotw}, i.e., $$\mathcal{T}=\mathcal{S}\circ\mathcal{R},$$ where $\mathcal{R}:M\to \hat{M}$, $\hat{M}=\{\phi \in M: \phi \geq K_1\}$, is the superposition operator associated with $\tilde{\xi}_{dd}$, i.e., $\mathcal{R}(v)(t)=\tilde{\xi}_{dd}(t,v(t))$, and $\mathcal{S}:\hat{M} \to M$
\begin{align*} 
\mathcal{S}(r)(t)=& \,\frac{z(t)}{r(t))}
-\frac{\hat{\sigma}_{dd}}{r(t)}
\exp\left(-\int_0^t\frac{\hat{\sigma}_{dd}}{r(\tau)}\, d\tau\right)
\left(\int_0^t\frac{z(\tau)}{r(\tau)}
\exp\left(\int_0^\tau\frac{\hat{\sigma}_{dd}}{r(\rho)}\, d\rho\right)\, d\tau
+ D_dw(0)\right).
\end{align*}
Note that both $\hat{\sigma}_{dd}$ and $\tilde{\xi}_{dd}$ are bounded from below by $K_1$ due to \eqref{coercivity}. We can then conclude that $\mathcal{T}$ is a self-mapping on 
\[
M=M_0=L^\infty(0,T;L^\infty(G^+)),
\]
and on 
\[
M=M_1=L^2(0,T;H^1(G^+))\cap L^\infty(0,T;L^\infty(G^+))\,,
\]
since $z \in L^2(0,T;H^1(G^+)) \cap L^\infty(0,T;L^\infty(G^+))$. Moreover, $$\mathcal{R}'(D_d\dot{w})
=\frac{\partial}{\partial v}
\Bigl(\hat{\xi}_{dd}+|J_\Phi^T(D_1\dot{w},\ldots,D_{d-1}\dot{w},v)|^{q-1}\hat{\eta}_{dd}\Bigr)\vert_{v=D_d\dot{w}}$$ is small if $\|\nabla \dot{u}_i\|_{L^{\infty}(0,T;L^\infty(\Omega_i))}$ and therefore $\|\nabla \dot{w}\|_{L^{\infty}(0,T;L^\infty(G^+))}$ is small. This implies that $\mathcal{T}$ is a contraction on 
\[
\tilde{M}_0=\{v\in M_0 \, : \ \|v-D_d\dot{w}\|_{L^\infty(0,T;L^\infty(G^+))}\leq\gamma\}
\] 
and on 
\[
\tilde{M}_1=\{v\in M_1 \, : \ \|v-D_d\dot{w}\|_{L^\infty(0,T;L^\infty(G^+))}\leq\gamma\}\,.
\] 
for $\gamma$ sufficiently small. Thus the fixed point equation $v=\mathcal{T}(v)$ has a unique solution $v_0$ in $\tilde{M}_0$ and it also has a uniqe solution $v_1$ in $\tilde{M}_1$, and both have to coincide $v_1=v_0$ by uniqueness on $\tilde{M}_0\supseteq \tilde{M}_1$. On the other hand, obviously $D_d\dot{w}$ lies in $\tilde{M}_0$, solves this fixed point equation and thus has to coincide with $v_0$, hence also with $v_1$. This proves that $D_d\dot{w}\in M_1\subseteq L^2(0,T;H^1(G^+))$. \\
\indent By transforming $w$ back to $u$, we can conclude that $u \in H^1(0,T;H^2(B^+))$. The assertion then follows from the fact that the boundary is compact and can be covered by a finite set of balls $\{B_{r_i/2}(x_i)\}_{i=1}^N$.
\end{proof}
\noindent Higher boundary regularity was obtained under the assumption that $u$ belongs to $W^{1,\infty}(0,T;W^{1,\infty}(\Omega))$; this was necessitated  by the presence of the $q$-Laplace damping term in the equation. The assumption is equivalent to assuming Lipschitz continuity of $u$ in time and space, i.e.  $u \in C^{0,1}(0,T;C^{0,1}(\overline{\Omega}))$ (see Theorem 4, Chapter 5, \cite{evans}). 
\subsection{Neumann problem for the coupled system}
It remains to show $H^2$-regularity up to the boundary for the Neumann problem \eqref{coupled_Neumann}. 
\begin{theorem} Let the assumptions of Corollary \ref{interior_reg_Neumann} hold, let $\partial \Omega$ and $\Gamma=\partial \Omega_+$ be $C^{1,1}$ regular and let $u$ be the weak solution of \eqref{coupled_Neumann}. Furthermore, assume that $g \in L^2(0,T;H^{1/2}(\partial \Omega))$. If $u \in W^{1,\infty}(0,T;W^{1,\infty}(\Omega))$ and $\|\nabla \dot{u}\|_{L^{\infty}(\Omega)(0,T;L^\infty(\Omega))}$ is sufficiently small, then $u_i \in H^1(0,T;H^2(\Omega_i))$, $i \in \{+,-\}$. 
\end{theorem}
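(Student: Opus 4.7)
The plan is to mirror the proof of Theorem \ref{thm:boundary_reg_coupled_Dirichlet} and isolate the only new ingredient, namely the contribution of the Neumann datum $g$ near $\partial\Omega$. I would establish $u_+\in H^1(0,T;H^2(\Omega_+))$ and $u_-\in H^1(0,T;H^2(\Omega_-))$ separately. Since $\partial\Omega_+=\Gamma$ sits in the interior of $\Omega$ and is untouched by the outer boundary condition, the proof for $u_+$ is a verbatim copy of Theorem \ref{thm:boundary_reg_coupled_Dirichlet}. For $u_-$ I would cover $\partial\Omega_-=\Gamma\cup\partial\Omega$ by finitely many boundary charts; in the charts centred at interface points the Dirichlet argument applies unchanged because the cut-off $\zeta$ vanishes before reaching $\partial\Omega$, so only the charts centred at points of $\partial\Omega$ are genuinely new.

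In such a chart near $\partial\Omega$ I would straighten the boundary by a $C^{1,1}$-diffeomorphism $\Psi$ exactly as in Theorem \ref{thm:boundary_reg_coupled_Dirichlet}, transform the weak equation to the flat half-space, and choose the cut-off $\zeta$ and the tangential difference-quotient test function $\phi=-D_r^{-l}(\zeta^2 D_r^l\dot w)$ with $r\in\{1,\dots,d-1\}$. The only difference with the Dirichlet case is that $\phi$ no longer vanishes on $\{y_d=0\}$, so testing the transformed weak form produces the extra surface term
\[
\int_0^t\int_{\{y_d=0\}\cap U'}\hat g(y',s)\,\phi(y',s)\,dS(y')\,ds,
\]
where $\hat g$ denotes the pulled-back Neumann datum. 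Discrete integration by parts in the tangential direction $r$ on the flat interface rewrites this integral as $\int_0^t\int_{\{y_d=0\}\cap U'}D_r^l\hat g\,(\zeta^2 D_r^l\dot w)\,dS\,ds$, and the $H^{1/2}$--$H^{-1/2}$ duality, the trace theorem, and Lemma \ref{reg_evans}(a) yield the bound
\[
\varepsilon\,\|\zeta D_r^l\nabla\dot w\|^2_{L^2(0,T;L^2(W'))}+C_\varepsilon\,\|g\|^2_{L^2(0,T;H^{1/2}(\partial\Omega))},
\]
uniformly in $l$, with the first summand absorbed by the coercive damping contribution on the left-hand side for $\varepsilon$ sufficiently small. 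All remaining estimates of Step 2 of Theorem \ref{thm:boundary_reg_coupled_Dirichlet} transfer verbatim, yielding $D_jD_iw\in H^1(0,T;L^2(G^+))$ for every $1\le i\le d$ and $1\le j\le d-1$.

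With the tangential mixed derivatives under control, Step 3 of the Dirichlet proof needs no modification: the strong form of the transformed equation still produces the pointwise ODE
\[
-D_d z(t)=\hat f(w)(t),\qquad z(t)=\hat\sigma_{dd}D_dw(t)+\tilde\xi_{dd}(t,D_d\dot w(t))\,D_d\dot w(t),
\]
and $D_d\dot w$ is recovered as the unique fixed point of the operator $\mathcal{T}=\mathcal{S}\circ\mathcal{R}$ on $\tilde M_1$, contractivity following from the smallness of $\|\nabla\dot w\|_{L^\infty(0,T;L^\infty(G^+))}$ as in the Dirichlet proof. This gives $D_{dd}w\in L^2(0,T;H^1(G^+))$, hence $u_-\in H^1(0,T;H^2(B^+))$ in each chart; compactness of $\partial\Omega_-$ and finitely many transforms back to $x$-coordinates complete the proof. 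The main technical obstacle I expect is the clean absorption of the new surface integral: one must verify that the regularity $g\in L^2(0,T;H^{1/2}(\partial\Omega))$ is exactly strong enough to pair against the traces of $\zeta^2 D_r^l\dot w$ with constants independent of $l$, and that the resulting estimate is compatible with the smallness thresholds on $\bar m$ and $\|\nabla\dot w\|_{L^\infty}$ already imposed in Proposition \ref{prop:Neumann}.
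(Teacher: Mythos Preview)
Your proposal is correct and follows essentially the same route as the paper: reduce to the Dirichlet argument for $\Omega_+$ and for the $\Gamma$-charts of $\Omega_-$, and in the $\partial\Omega$-charts isolate the single new contribution, namely the surface term coming from $g$, which you bound via the $H^{1/2}$ regularity of $g$, the trace theorem, and Young's inequality so that the resulting $\varepsilon$-term is absorbed on the left; Step 3 is then unchanged because the test functions used there are compactly supported. The only minor slip is the phrase ``$D_{dd}w\in L^2(0,T;H^1(G^+))$'' at the end of Step 3 --- the fixed point argument yields $D_d\dot w\in L^2(0,T;H^1(G^+))$, hence $D_{dd}w\in H^1(0,T;L^2(G^+))$, which is what you need.
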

\begin{proof}
We will show that $u_- \in H^1(0,T;H^2(\Omega_-))$, since the regularity on $\Omega_+$ follows as in the proof of Theorem  \ref{thm:boundary_reg_coupled_Dirichlet}. We begin again as before, by straightening the boundary around $x_0 \in \partial \Omega_- \setminus \partial \Omega_+$. There exists a ball $B=B_r(x_0)$ for some $r>0$ and a $C^{1,1}$-diffeomorphism $\Psi:B \rightarrow \Psi(B) \subset \mathbb{R}^d$ such that $\text{det}|\nabla \Psi|=1$, $U^{\prime}=\Psi(B \cap \Omega_-) \subset \mathbb{R}^d_+$ is an open set, and $\Psi(B\cap \partial \Omega_-) \subset \partial \mathbb{R}^d_+$. We change the variables and write
\begin{align*}
&y=\Psi(x), \ x \in B, \\
&x=\Phi(y), \ y \in U^{\prime}.
\end{align*} 
We denote
$
 G:=\Psi(B_{\frac{r}{2}}\cap \Omega_-) \subset \subset U^{\prime}$. 
We define
\begin{align*}
w(y,t):= u(\Phi(y),t), \quad (y,t) \in U^{\prime} \times [0,T],
\end{align*}
and transform the orginial equation from $(B \cap \Omega) \times [0,T]$ to $U^\prime \times [0,T]$:
\begin{equation} \label{eq:boundary_reg_Neumann}
\begin{aligned}
& \int_{U^{\prime}}\Bigl\{\frac{1}{\hat{\lambda}}(1-2\hat{k}w(t))\ddot{w}(t)\phi+\displaystyle \sum_{i,j=1}^d\Bigl(\hat{\sigma}_{ij}D_iw(t)D_j\phi+\hat{\xi}_{ij}D_i \dot{w}_i(t) D_j \phi \\
&+|J_\Phi^T\nabla \dot{w}(t)|^{q-1}\hat{\eta}_{ij}D_i\dot{w}(t)D_j  \phi\Bigr)-\frac{2\hat{k}}{\hat{\lambda}}(\dot{w}(t))^2\phi\Bigr\} \, dy=\int_{\partial U^\prime} g \phi  |J^{-T}_{\Phi}n|_{\mathbb{R}^d}\, dx,
\end{aligned}
\end{equation}
for a.e. $t \in [0,T]$, and all $\phi \in W^{1,q+1}(U^{\prime})$, and $\hat{\lambda}$, $\hat{k}$, $\hat{\sigma}_{ij}$, $\hat{\xi}_{ij}$, and $\hat{\eta}_{ij}$  are defined as in \eqref{def_coeff}. We then again use $\phi=-D_r^{-l}(\zeta^2D_r^l\dot{w}(t))$, $r \in \{1,\ldots, d-1\}$ as a test function and proceed with the estimates like in the proof of Theorem \ref{thm:boundary_reg_coupled_Dirichlet}. The only difference here is the need to estimate the boundary integral over $\partial U^\prime$ appearing in the weak form, therefore we focus our attention solely on estimating this term:
\begin{align*}
&-\int_0^t\int_{\partial U^\prime} g D_r^{-l}(\zeta^2D_r^l\dot{w})  |J^{-T}_{\Phi}n|\, dx \, ds \\
=& \, \int_0^t\int_{B\cap\partial \Omega_-} g D_r^{-l}(\zeta^2D_r^l\dot{u})  \, dx \, ds \\
\leq&  \, C \|g\|_{L^2(0,T;H^{1/2}(\partial \Omega_-))} \|\zeta^2D_r^l\dot{u}\|_{L^2(0,T;H^{1/2}(\partial \Omega_-))} \\
\leq& \, C \|g\|_{L^2(0,T;H^{1/2}(\partial \Omega_-))} \|\zeta D_r^l\dot{u}\|_{L^2(0,T;H^{1}(\Omega_-))}\\
\leq& \, \varepsilon \|\zeta D_r^l\dot{u}\|^2_{L^2(0,T;H^{1}(\Omega))}+\frac{1}{4\varepsilon}C^2 \|g\|^2_{L^2(0,T;H^{1/2}(\partial \Omega_-))}.
\end{align*}
Showing that $D_{dd}w \in H^1(0,T;L^2(G))$ follows as in the proof of Theorem \ref{thm:boundary_reg_coupled_Dirichlet}, since we use $\phi \in C^{\infty}_0(G)$ in \eqref{eq:boundary_reg_Neumann}.
\end{proof}
\section*{Acknowledgments} The financial support by the FWF (Austrian Science Fund) under grant P24970 is gratefully acknowledged as well as the support of the Karl Popper Kolleg "Modeling-Simulation-Optimization", which is funded by the Alpen-Adria-Universit\" at Klagenfurt and by the Carinthian Economic Promotion Fund (KWF).
\section*{References}

\end{document}